\newcommand{\WSd}{{{\rm WS}_d}}
\newcommand{\WS}{{{\rm WS}_3}}
\newcommand{\spSd}{\spS_d^{d-1}}
\newcommand{\spline}{s}
\newcommand{\objmm}[2]{{#1}^{#2}}
\newcommand{\objme}[2]{\objmm{{\widetilde #1}}{#2}}
\newcommand{\objbmm}[3]{{#1}^{#2,#3}}
\newcommand{\objbme}[3]{\objbmm{{\widetilde #1}}{#2}{#3}}
\newcommand{\Bmm}[1]{\objmm{B}{#1}}
\newcommand{\Hmm}[1]{\objmm{H}{#1}}
\newcommand{\basis}[1]{\mathcal{#1}}
\newcommand{\basisB}{\basis{B}}
\newcommand{\basisH}{\basis{H}}
\newcommand{\basisBmm}[1]{\objmm{\basisB}{#1}}
\newcommand{\basisBme}[1]{\objme{\basisB}{#1}}
\newcommand{\basisHmm}[1]{\objmm{\basisH}{#1}}
\newcommand{\basisHme}[1]{\objme{\basisH}{#1}}
\newcommand{\matR}{\mR}
\newcommand{\matRBmm}[1]{\objbmm{\matR}{\basisB}{#1}}
\newcommand{\matRBme}[1]{\objbme{\matR}{\basisB}{#1}}
\newcommand{\matRHmm}[1]{\objbmm{\matR}{\basisH}{#1}}
\newcommand{\matRHme}[1]{\objbme{\matR}{\basisH}{#1}}
\newcommand{\matC}{\mC}
\newcommand{\matCmm}[1]{\objmm{\matC}{#1}}
\newcommand{\matX}{\mX}
\newcommand{\matXmm}[1]{\objmm{\matX}{#1}}
\newcommand{\matXme}[1]{\objme{\matX}{#1}}
\newcommand{\entry}{{\large $\bullet$}}
\newcommand{\mbar}{{\overline m}}
\newtheorem{theorem}{Theorem}
\newtheorem{lemma}[theorem]{Lemma}
\newtheorem{definition}[theorem]{Definition}
\newtheorem{proposition}[theorem]{Proposition}
\newtheorem{remark}[theorem]{Remark}
\theoremstyle{definition}
\newtheorem{procedure}[theorem]{Procedure}
\newtheorem{examples}[theorem]{Example}
\newcommand{\be}[1]{\begin{equation}\label{#1}}
\newcommand{\ee}{\end{equation}}
\newcommand{\bdef}[1]{\begin{definition}\label{#1}}
\newcommand{\bthm}[1]{\begin{theorem}\label{#1}}
\newcommand{\ethm}{\end{theorem}}
\newcommand{\blem}[1]{\begin{lemma}\label{#1}}
\newcommand{\elem}{\end{lemma}}
\newcommand{\ba}[1]{\begin{array}{#1}}
\newcommand{\ea}{\end{array}}
\newcommand{\bi}{\begin{itemize}}
\newcommand{\ei}{\end{itemize}}
\newcommand{\bex}{\begin{exercise}}
\newcommand{\eex}{\end{exercise}}
\newcommand{\ben}{\begin{enumerate}}
\newcommand{\een}{\end{enumerate}}
\newcommand{\bas}{\begin{align*}}
\newcommand{\eas}{\end{align*}}
\newcommand{\bbm}{\begin{bmatrix}}
\newcommand{\ebm}{\end{bmatrix}}
\newcommand{\bvm}{\begin{vmatrix}}
\newcommand{\evm}{\end{vmatrix}}
\newcommand{\bsm}{\left[\begin{smallmatrix}}
\newcommand{\esm}{\end{smallmatrix}\right]}
\newcommand{\bs}[1]{\begin{slide}{#1}}
\newcommand{\es}{\end{slide}}
\newcommand{\bpf}{\begin{proof}}
\newcommand{\epf}{\end{proof}}
\newcommand{\vn}{{\boldsymbol{n}}}
\newcommand{\vp}{{\boldsymbol{p}}}
\newcommand{\vq}{{\boldsymbol{q}}}
\newcommand{\vt}{{\boldsymbol{t}}}
\newcommand{\vw}{{\boldsymbol{w}}}
\newcommand{\vx}{{\boldsymbol{x}}}
\newcommand{\vy}{{\boldsymbol{y}}}
\newcommand{\spP}{{\mathbb{P}}}
\newcommand{\spS}{{\mathbb{S}}}
\newcommand{\cT}{{\mathcal{T}}}
\newcommand{\NN}{{\mathbb{N}}}
\newcommand{\RR}{{\mathbb{R}}}
\newcommand{\abs}[1]{\lvert#1\rvert}
\newcommand{\makehosquare}[3]%
{\dimen0=#1\advance\dimen0 by -#3%
\vrule height#1 width#2 depth0pt \kern-#2%
\vrule height#1 width#1 depth-\dimen0 \kern-#1%
\vrule height#2 width#1 depth0pt \kern-#3%
\vrule height#1 width#3 depth0pt%
}
\newcommand{\mC}{{\boldsymbol{C}}}
\newcommand{\mI}{{\boldsymbol{I}}}
\newcommand{\mO}{{\boldsymbol{O}}}
\newcommand{\mR}{{\boldsymbol{R}}}
\newcommand{\mX}{{\boldsymbol{X}}}
\begin{document}
\captionsetup[subfigure]{labelformat=empty}

\title{\textbf{A parsimonious approach to $C^2$ cubic splines on \\ arbitrary triangulations: Reduced macro-elements \\ on the cubic Wang--Shi split}}

\author{Tom Lyche\footnote{Tom Lyche, Dept. of Mathematics, University of Oslo, Norway, \textit{email: tom@math.uio.no}}
,
Carla Manni\footnote{Carla Manni, Dept. of Mathematics, University of Rome Tor Vergata, Italy, \textit{email: manni@mat.uniroma2.it}}
,
Hendrik Speleers\footnote{Hendrik Speleers, Dept. of Mathematics, University of Rome Tor Vergata, Italy,
\textit{email: speleers@mat.uniroma2.it}}}

\date{}

\maketitle

\begin{abstract}
We present a general method to obtain interesting subspaces of the $C^2$ cubic spline space defined on the cubic Wang--Shi refinement of a given arbitrary triangulation $\cT$.
These subspaces are characterized by specific Hermite degrees of freedom associated with only the vertices and edges of $\cT$, or even only the vertices of $\cT$.
Each subspace still contains cubic polynomials while saving a consistent number of degrees of freedom compared with the full space. The dimension of the considered subspaces can be as small as six times the number of vertices of $\cT$.
The method fits in the setting of macro-elements: any function of such a subspace can be constructed on each triangle of $\cT$ separately by specifying the necessary Hermite degrees of freedom. The explicit local representation in terms of a local simplex spline basis is also provided. This simplex spline basis intrinsically takes care of the complex geometry of the Wang--Shi split, making it transparent to the user.

\medskip
\noindent
\emph{Keywords:} B-splines, Simplex splines, Macro-elements, Triangulations
\end{abstract}

\section{Introduction}
Smooth splines on triangulations are a popular tool to approximate or interpolate function data \cite{Lai.Schumaker07}.
Following the classical finite element approach \cite{Ciarlet.78}, such spline spaces are often characterized by means of Hermite degrees of freedom associated with the vertices, edges, and/or triangles of the triangulation.
In practical situations, however, data values are not always available in the interior of the triangles and are often only attached to the vertices of a given triangulation. Therefore, it is preferable to avoid Hermite degrees of freedom associated with the triangles, and even better, to have them only associated with the vertices. Additionally, for the sake of approximation quality, the provided construction should ensure reproduction of polynomials up to the highest possible degree. Finally, local constructions that can be performed on each triangle separately are highly attractive from a computational viewpoint.

In classical finite element constructions, it is a common procedure to remove Hermite degrees of freedom associated with the triangles or edges by expressing them as a linear combination, with suitable coefficients, of the degrees of freedom associated with the vertices \cite{Ciarlet.78}. However, the full approximation order is not always preserved in those constructions; this is, e.g., the case for the Bell element \cite{Bell.69}. Expressing degrees of freedom in terms of other degrees of freedom can also be helpful in the construction of B-spline-like basis functions for reduced spline spaces on triangulations \cites{Speleers10,Speleers13}.

In the univariate case, splines of maximal smoothness have proven to be very useful. They maintain the same approximation order as less smooth splines of the same degree but involve fewer degrees of freedom and have less tendency to oscillate \cites{Sande20,Schumaker07}. 
When moving to the bivariate setting on triangulations, maximal smoothness is still very appealing but becomes an arduous task to achieve \cites{Lai.Schumaker07,Schumaker15}. Bivariate spline spaces with too low degree compared to the smoothness are exposed to several shortcomings: they may lack a stable dimension, optimal approximation power, and stable locally supported bases.

To construct $C^2$ cubic spline functions on a given triangulation $\cT$, it is opportune to refine each triangle of $\cT$ by the Wang--Shi (WS) split of order three; see \cites{LycheMS22,Wang01,Wang.90}. Even though the split is rather complex, the corresponding $C^2$ cubic spline space possesses a stable dimension and can be simply characterized by means of certain Hermite degrees of freedom, of which there are six per vertex, three per edge, and one per triangle in the triangulation. These degrees of freedom allow us to locally build a spline function on each triangle of $\cT$ separately. Furthermore, the space is equipped with a local simplex spline basis \cite{LycheMS22}, which enjoys properties similar to the Bernstein polynomial basis on a triangle. Thanks to the characteristics of this local simplex spline basis, one can avoid to consider separate polynomial representations on each of the polygonal subregions of the WS split. Instead, there is a single control net to facilitate control and early visualization of a spline function over each triangle of $\cT$. This makes that the complex geometry of the WS split is transparent to the user. A similar simplex spline construction for $C^3$ quartic splines has been developed in \cite{LycheMS23}.
  
In this paper, we present a general method to obtain interesting subspaces of the $C^2$ cubic spline space defined on a WS refined triangulation. These subspaces are characterized by specific Hermite degrees of freedom associated with only the vertices and edges of $\cT$, or even only the vertices of $\cT$. Each subspace contains cubic polynomials, a necessary condition for attaining full approximation order. Moreover, any element of such a subspace can be constructed on each triangle of $\cT$ separately by specifying the necessary Hermite degrees of freedom. We also provide the explicit local representation of such an element in terms of the local simplex spline basis. Therefore, we provide a construction of $C^2$ cubic splines on triangulations that is parsimonious in the sense that it produces spaces still containing cubic polynomials but requiring fewer degrees of freedom. The dimension of the considered subspaces can be as small as six times the number of vertices of the given triangulation.

The remainder of the paper is divided into four sections. Section~\ref{sec:preliminaries} collects some introductory material about the local $C^2$ cubic spline space on a single WS refined triangle and the global $C^2$ cubic spline space on a WS refined triangulation. Section~\ref{sec:reduced-spaces-local} describes a procedure how to peel away Hermite degrees of freedom from the local spline space on each triangle and how to represent the resulting subspaces in terms of a local simplex spline basis. These local subspaces can then be assembled to form a global subspace containing cubic polynomials as outlined in Section~\ref{sec:reduced-spaces-global}. Section~\ref{sec:conclusion} concludes with some final remarks. 
Since this paper is a follow-up to \cite{LycheMS22}, we use the same notation and terminology as introduced in \cite{LycheMS22} for ease of comprehension.

\section{The cubic spaces $\spS_3^2(\Delta_{\WS})$ and $\spS_3^2(\cT_{\WS})$}
\label{sec:preliminaries}
In this section, we provide some preliminary material necessary for the rest of the paper. We first summarize the construction of $C^2$ cubic splines on the WS split of order three of a given triangle by specifying suitable Hermite degrees of freedom \cites{LycheMS22,Wang.90}, and then extend it to obtain $C^2$ cubic splines on an arbitrary triangulation $\cT$. Subsequently, we recall from \cites{LycheMS22} the definition and main properties of a local simplex spline basis for the considered local space on any triangle of $\cT$, and we describe its relation with the above mentioned Hermite degrees of freedom.

\begin{figure}[t!]
\centering
\includegraphics[width=3.8cm]{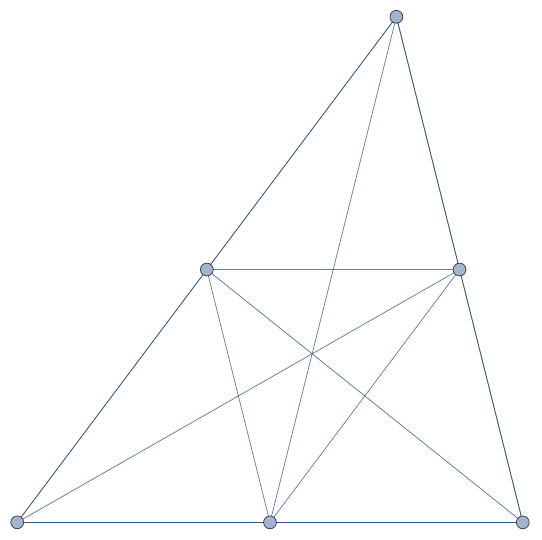} \
\includegraphics[width=3.8cm]{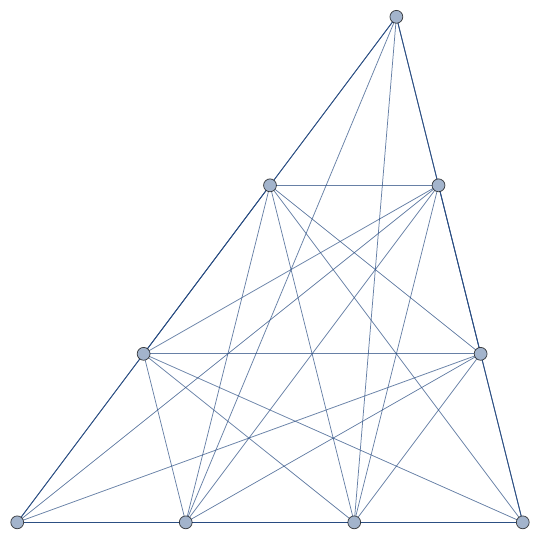} \
\includegraphics[width=3.8cm]{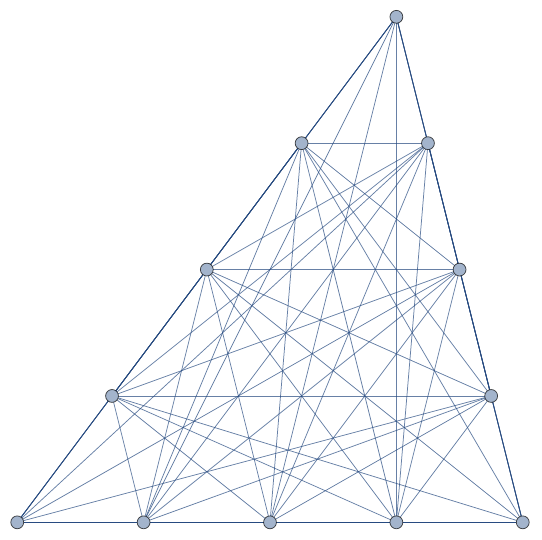} \\[0.2cm]
\caption{$\WSd$ splits for $d=2,3,4$.}
\label{fig:splits}
\end{figure}

\subsection{Definition of the spaces}
Let $\vp_1,\vp_2,\vp_3$ be three noncollinear points in $\RR^2$ and consider the triangle $\Delta:=\langle\vp_1,\vp_2,\vp_3\rangle$. We focus on a special refinement of $\Delta$.
Given a degree $ d\in\NN$, we divide each edge of $\Delta$ into $d$ equal segments, respectively, resulting into $3d$ boundary points. Then, we partition $\Delta$ into a number of regions delineated by the complete graph connecting those boundary points. This is called the $\WSd$ split of $\Delta$ as it was originally proposed by Wang and Shi \cite{Wang.90}.
We denote the obtained mesh structure by $\Delta_{\WSd}$.
The cases $d=2,3,4$ are shown in Figure~\ref{fig:splits}. 
For $d=2$ we obtain the well-known PS-12 split \cite{Powell.Sabin77}.

Let $\spP_d$ be the space of bivariate polynomials, with real coefficients, of total degree at most $d$.
We define the spline space
$$
\spSd(\Delta_{\WSd}):=\{\spline\in C^{d-1}(\Delta): \spline_{|\tau} \in\spP_d,\   \tau\text{ is polygon of } \Delta_{\WSd}\}.
$$
In this paper, we focus on the case $d=3$. From \cite[Theorem~2]{LycheMS22} we know that
$$ 
\dim(\spS_3^2(\Delta_{\WS}))=\dim\spP_3+18=28,
$$
where $18$ is the number of interior lines in the complete graph of $\WS$.

\begin{figure}[t!]
\centering
\begin{picture}(130,160)(0,-5)
\put(0,10){\includegraphics[width=4.75cm]{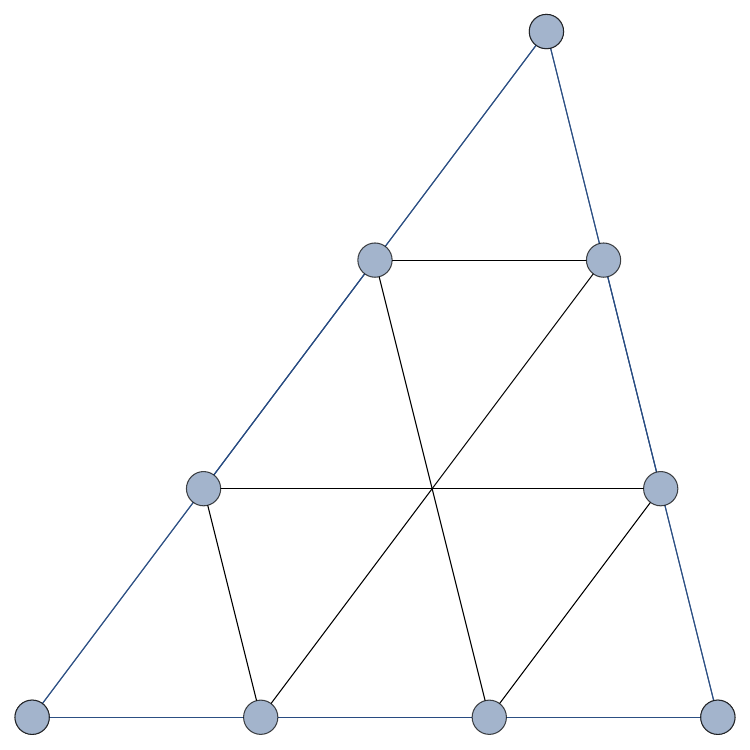}}
\put(0,0){$\vp_1$}
\put(125,0){$\vp_2$}
\put(95,150){$\vp_3$}
\put(40,0){$\vp_{3,1}$}
\put(81,0){$\vp_{3,2}$}
\put(41,97){$\vp_{2,3}$}
\put(10,55){$\vp_{2,1}$}
\put(129,55){$\vp_{1,2}$}
\put(119,97){$\vp_{1,3}$}
\end{picture}
\caption{Labeling of the knots on the boundary of the triangle $\Delta$.}
\label{fig:bvertexorder}
\end{figure}

We now introduce the points
(see Figure~\ref{fig:bvertexorder})
\begin{equation}\label{eq:knot-points}
\begin{alignedat}{4}
\vp_{1,2}&:=\frac{2}{3}\vp_2+\frac{1}{3}\vp_3, &\quad
\vp_{1,3}&:=\frac{1}{3}\vp_2+\frac{2}{3}\vp_3, \\
\vp_{2,1}&:=\frac{2}{3}\vp_1+\frac{1}{3}\vp_3, &
\vp_{2,3}&:=\frac{1}{3}\vp_1+\frac{2}{3}\vp_3, \\
\vp_{3,1}&:=\frac{2}{3}\vp_1+\frac{1}{3}\vp_2,&
\vp_{3,2}&:=\frac{1}{3}\vp_1+\frac{2}{3}\vp_2,
\end{alignedat}
\end{equation}
which play the role of knots of the spline basis defined later (see Section~\ref{sec:basis}), 
and the points
\begin{equation} \label{eq:dof-points-1}
\begin{alignedat}{4}
\vq_1 &:=\frac{1}{2}\vp_2+\frac{1}{2}\vp_3, \quad
\vq_2 :=\frac{1}{2}\vp_1+\frac{1}{2}\vp_3, \quad
\vq_3 :=\frac{1}{2}\vp_1+\frac{1}{2}\vp_2, \\
\vq &:=\frac{1}{3}\vp_1+\frac{1}{3}\vp_2+\frac{1}{3}\vp_3.\\
\end{alignedat}
\end{equation}
Then we have the following characterization. For its proof, we refer the reader to \cite[Corollary~1]{LycheMS22} and a schematic visualization can be found in Figure~\ref{fig:dof}.
\begin{proposition} \label{pro:hermite}
For given data $f_{k,\alpha,\beta}$, $g_k$, $g_{k,l}$, and $h_0$, there exists a unique spline $\spline\in\spS_3^2(\Delta_{\WS})$ such that
\begin{align*}
D_x^\alpha D_y^\beta\spline(\vp_k) &= f_{k,\alpha,\beta}, \quad 0\leq \alpha+\beta\leq 2,\quad k=1,2,3, \\
D_{\vn_k}\spline(\vq_{k}) &= g_{k},\quad  D_{\vn_k}^2\spline(\vp_{k,l}) = g_{k,l},\quad  k,l=1,2,3, \quad k\neq l, \\
\spline(\vq) &= h_0,
\end{align*}
where $\vn_k$ is the normal direction of the edge opposite to vertex $\vp_k$, and the points $\vp_{k,l}$, $\vq_{k}$, and $\vq$ are defined in \eqref{eq:knot-points} and \eqref{eq:dof-points-1}.
\end{proposition}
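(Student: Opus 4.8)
The plan is to exploit that the number of prescribed Hermite data equals $\dim(\spS_3^2(\Delta_{\WS}))=28$: there are $6$ data per vertex ($18$ in total) from the $f_{k,\alpha,\beta}$, one per edge midpoint from the $g_k$ ($3$ in total), two per edge from the $g_{k,l}$ ($6$ in total), and the single centroid value $h_0$, for a grand total of $28$. Since the map sending $s$ to this ordered list of $28$ functionals is linear between spaces of equal dimension, existence and uniqueness for arbitrary data are both equivalent to injectivity. Hence it suffices to prove that the only $s\in\spS_3^2(\Delta_{\WS})$ for which all the listed data vanish is $s\equiv 0$.

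First I would analyze $s$ on the boundary $\partial\Delta$, edge by edge. Fix the edge opposite $\vp_1$ and parametrize it affinely; the $\WS$ split subdivides it only at $\vp_{1,2}$ and $\vp_{1,3}$, so the relevant restrictions are univariate splines with two interior knots. The restriction $s|_{\text{edge}}$ is a $C^2$ cubic spline (dimension $6$), and the vanishing of the full $2$-jet at $\vp_2$ and $\vp_3$ supplies exactly $6$ homogeneous Hermite conditions; writing $s|_{\text{edge}}$ in truncated-power form reduces these to a $3\times3$ confluent-Vandermonde system whose determinant factors as a nonzero product of differences of the knot abscissae, forcing $s|_{\text{edge}}\equiv 0$. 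Next, $D_{\vn_1}s|_{\text{edge}}$ is a $C^1$ quadratic spline (dimension $5$); the vanishing $2$-jets at the vertices give its value and tangential derivative at both endpoints ($4$ conditions), and $g_1=0$ gives its value at the midpoint $\vq_1$, and a short explicit computation shows these force it to vanish. Finally, $D_{\vn_1}^2 s|_{\text{edge}}$ is a continuous piecewise-linear function with nodes $\vp_2,\vp_{1,2},\vp_{1,3},\vp_3$ (dimension $4$); it vanishes at $\vp_2,\vp_3$ (from the vertex data) and at $\vp_{1,2},\vp_{1,3}$ (from $g_{1,2}=g_{1,3}=0$), hence is identically zero. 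Repeating for the other two edges shows that $s$ together with all its first and second derivatives vanishes along the whole of $\partial\Delta$; equivalently, $s$ has contact of order two with the zero function on $\partial\Delta$.

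It remains to use the last datum $s(\vq)=h_0=0$. Let $V_0\subset\spS_3^2(\Delta_{\WS})$ be the space of splines whose $2$-jet vanishes on $\partial\Delta$; the previous paragraph shows precisely that the $27$ boundary functionals cut out $V_0$, so $\dim V_0\geq 1$, and injectivity of the full system becomes equivalent to the two statements $\dim V_0=1$ and $s(\vq)\neq0$ for a nonzero $s\in V_0$. I expect this interior step to be the main obstacle, since it is where the intricate geometry of the split — its $18$ interior lines — genuinely enters. One way forward is Bernstein--B\'ezier bookkeeping: on each boundary cell a cubic with vanishing $2$-jet along the edge line $\ell=0$ must equal $c\,\ell^3$, and then the $C^2$ smoothness relations across the interior lines propagate these constants inward, so that the resulting homogeneous system should leave only a one-parameter family on which the centroid value is a nonzero functional. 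Alternatively, and most economically given the tools already at hand, one can read off both $\dim V_0=1$ and the nonvanishing of $s(\vq)$ directly from the local simplex spline basis of \cite{LycheMS22}, whose dual functionals are adapted to exactly these Hermite data.
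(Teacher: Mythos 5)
Your opening reduction is correct: the $28$ prescribed functionals match $\dim(\spS_3^2(\Delta_{\WS}))=28$, so it suffices to show that only $\spline\equiv 0$ annihilates all of them. Your boundary analysis is also correct and complete. Along an edge the trace of $\spline$ is a $C^2$ cubic spline with two interior breakpoints (dimension $6$), its normal derivative is a $C^1$ piecewise quadratic (dimension $5$), and its second normal derivative is a continuous piecewise linear function (dimension $4$) --- exactly the structures the paper itself records in Section~\ref{sec:peeling} --- and the three small homogeneous systems you describe are indeed nonsingular (e.g.\ on $[0,3]$ with knots $1,2$ the confluent-Vandermonde determinant for the cubic trace is $216\neq0$, and the $C^1$ quadratic system with endpoint values/slopes plus the midpoint value has determinant $6\neq0$). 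So you correctly reduce the proposition to: the subspace $V_0\subset\spS_3^2(\Delta_{\WS})$ of splines whose $2$-jet vanishes on $\partial\Delta$ satisfies $\dim V_0=1$, and evaluation at $\vq$ is nonzero on $V_0$.

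That last step is where the entire difficulty of the $\WS$ split is concentrated, and your proposal does not carry it out. The inequality $\dim V_0\geq 1$ is free (and explicit: $B_{28}$ lies in $V_0$, being $C^2$ across each side of its hexagonal support, with $B_{28}(\vq)=\tfrac12\neq0$ by Table~\ref{tab:hermiteB2-reference-tri-h}); but $\dim V_0=1$ is equivalent to the linear independence of the $27$ boundary functionals on the full $28$-dimensional space, which does not follow from anything you established, and your Bernstein--B\'ezier propagation sketch is only a plausibility argument (``should leave only a one-parameter family''). For what it is worth, the paper does not prove Proposition~\ref{pro:hermite} either: it quotes it from Corollary~1 of \cite{LycheMS22}, together with the dimension count $\dim\spS_3^2(\Delta_{\WS})=\dim\spP_3+18$ from Theorem~2 there. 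Your second fallback --- reading off nonsingularity from the collocation matrix of the simplex spline basis, which for the reference triangle is block lower triangular with explicitly invertible diagonal blocks (Tables~\ref{tab:hermiteB} and~\ref{tab:hermiteB2-reference-tri-h}), followed by an affine-change-of-variables argument for a general triangle --- is essentially that citation made concrete, and would close the gap; but as written the proposal stops short of a proof at precisely the step that needs one.
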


\begin{figure}[t!]
\centering
\includegraphics[trim=60 30 60 20,clip,width=7cm]{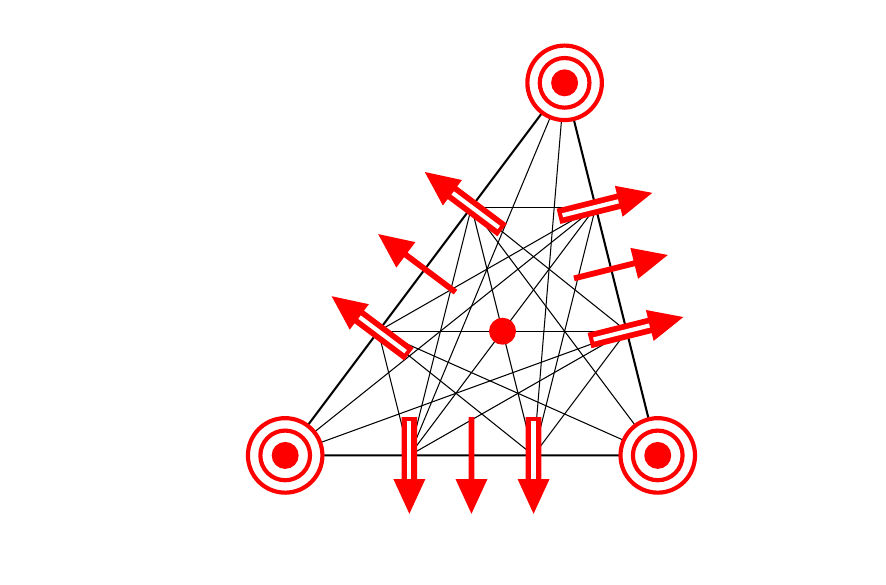}
\caption{Hermite degrees of freedom on the $\WS$ split.}
\label{fig:dof}
\end{figure}

Let $\cT$ be a triangulation of a polygonal domain $\Omega\subset\RR^2$ and let $\cT_{\WS}$ denote its refinement obtained by taking the $\WS$ split of each of its triangles. In this paper, we delve into the $C^2$ cubic spline space on $\cT_{\WS}$, i.e.,
$$
\spS_3^2(\cT_{\WS}):=\{\spline\in C^2(\Omega): \spline_{|\tau}\in \spP_3, \ \tau \text{ is polygon of } \cT_{\WS} \}.
$$
From Proposition~\ref{pro:hermite} we directly deduce that (see also \cites{LycheMS22,Wang.90})
\begin{equation}
\label{eq:dim-space}
\dim(\spS_3^2(\cT_{\WS}))=6n_V+3n_E+n_T,
\end{equation}
where $n_V$, $n_E$, $n_T$ are the number of vertices, edges, and triangles of $\cT$, respectively.
Moreover, any spline function of $\spS_3^2(\cT_{\WS})$ can be locally constructed on each (macro-)triangle $\Delta$ of $\cT$ via the Hermite data in Proposition~\ref{pro:hermite}. However, the complex structure of the $\WS$ split, which consists of 75 polygonal regions (including triangles, quadrilateral, and pentagons), makes it difficult to handle a plain piecewise polynomial representation of such a spline. For this reason, in \cite{LycheMS22}, a local basis has been provided for $\spS_3^2(\Delta_{\WS})$ in terms of (scaled) simplex splines.
We summarize the properties of such a local basis in the next section.

\subsection{Local simplex spline basis for $\spS_3^2(\Delta_{\WS})$}
\label{sec:basis}
\begin{figure}[t!]
\centering
\subfloat[1]{\includegraphics[width=1.5cm]{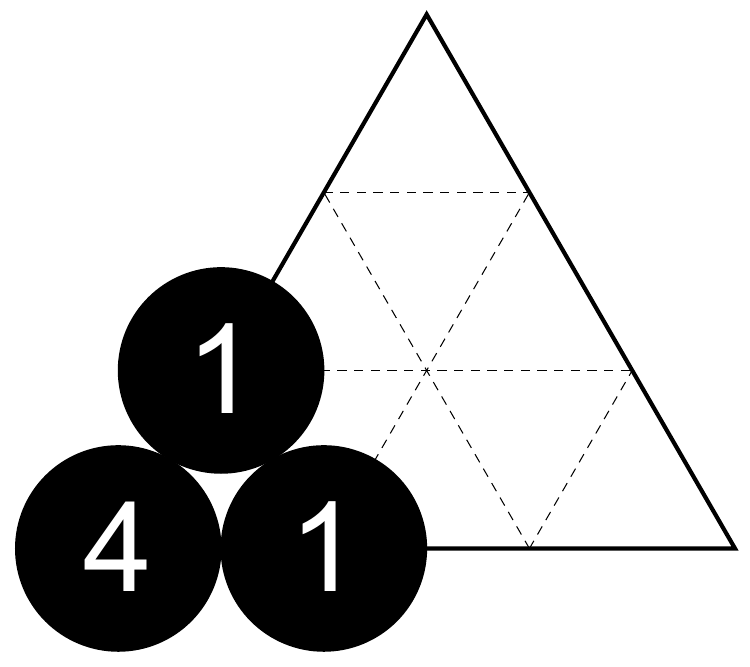}} \quad
\subfloat[2]{\includegraphics[width=1.5cm]{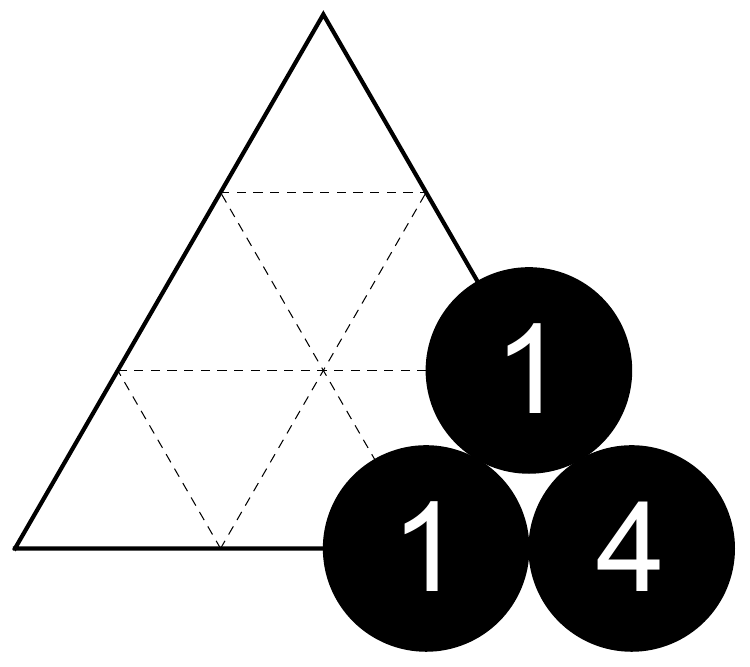}} \quad
\subfloat[3]{\includegraphics[width=1.3cm]{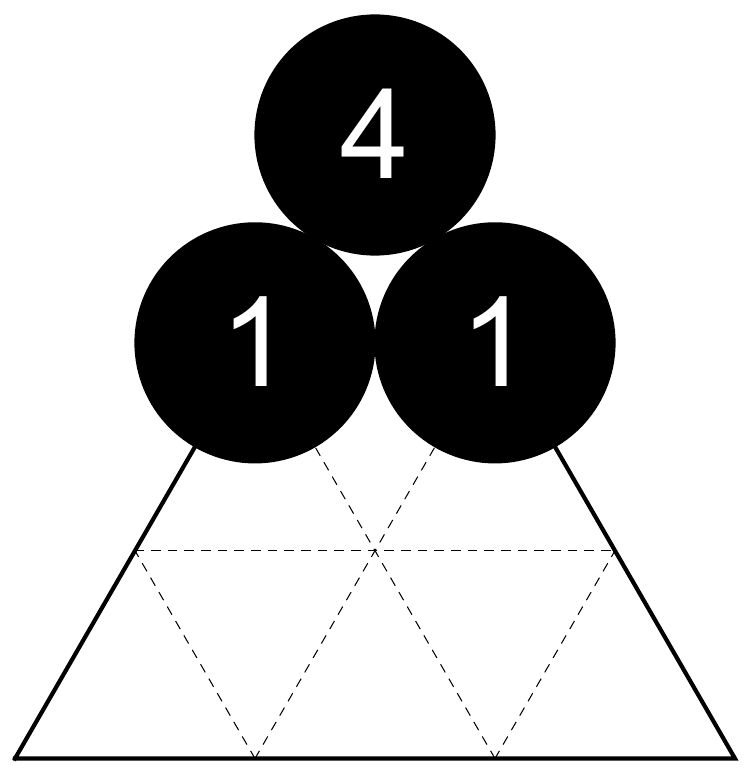}} \quad
\subfloat[4]{\includegraphics[width=1.5cm]{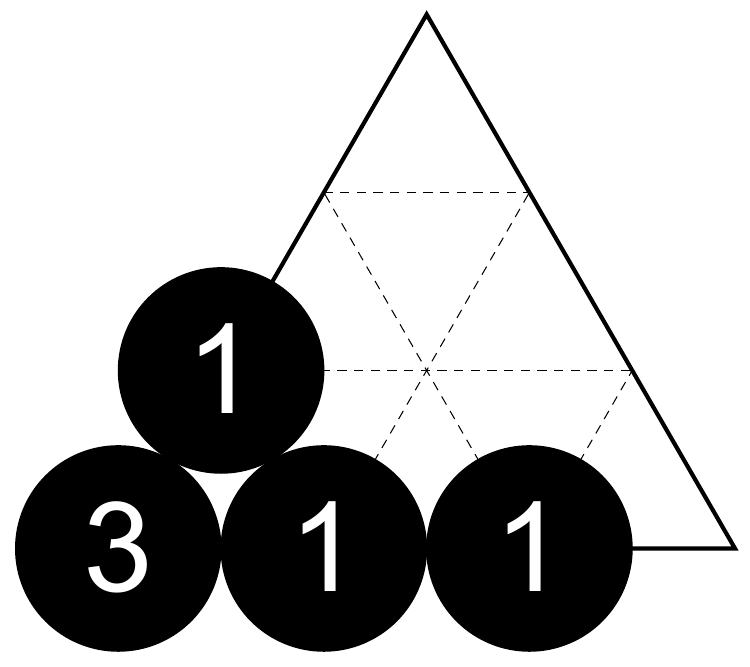}} \quad
\subfloat[5]{\includegraphics[width=1.5cm]{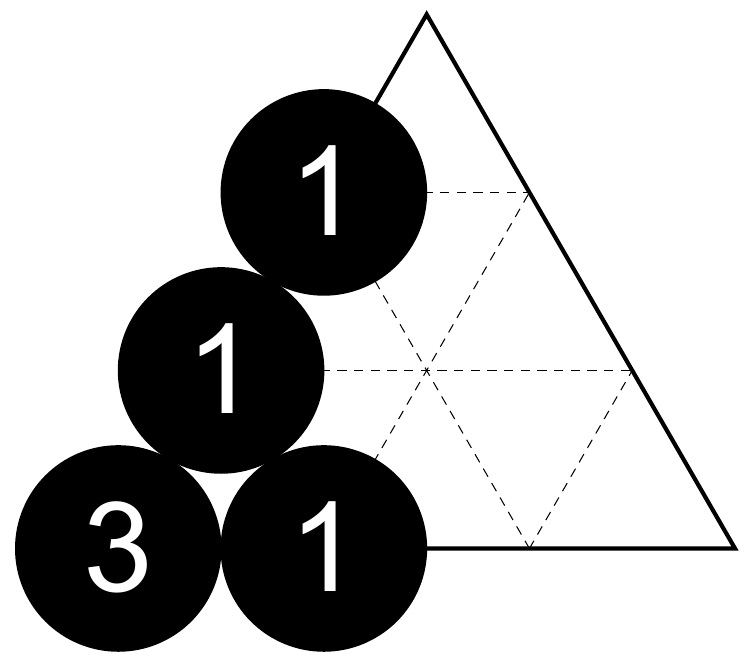}} \quad
\subfloat[6]{\includegraphics[width=1.5cm]{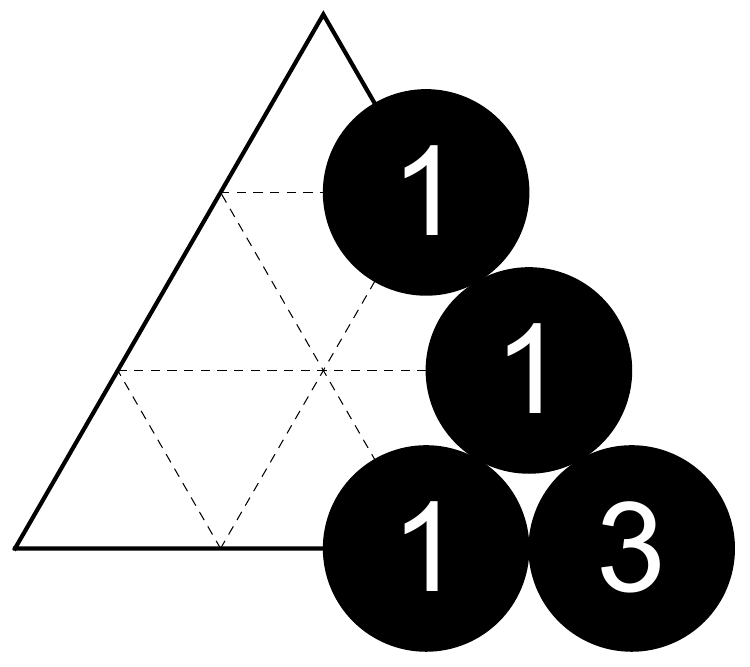}} \quad
\subfloat[7]{\includegraphics[width=1.5cm]{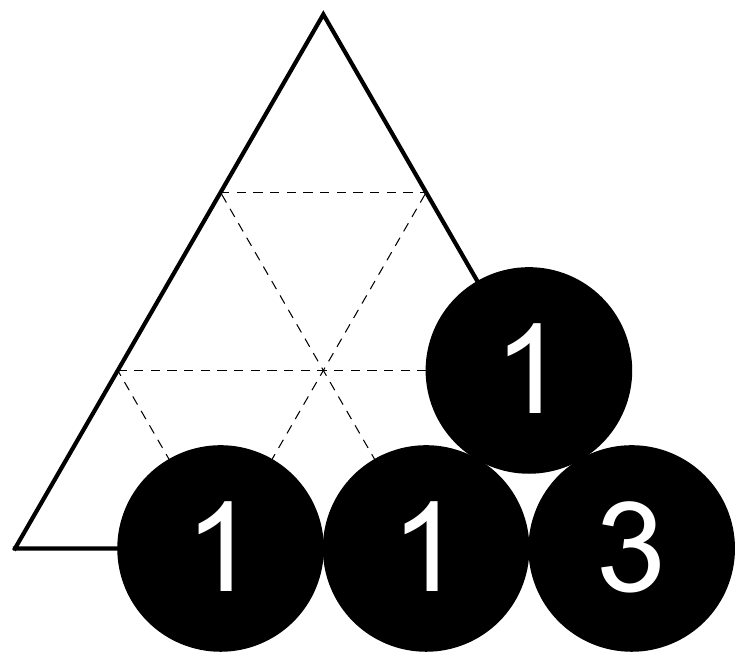}} \hspace*{-0.7cm}
\\
\subfloat[8]{\includegraphics[width=1.35cm]{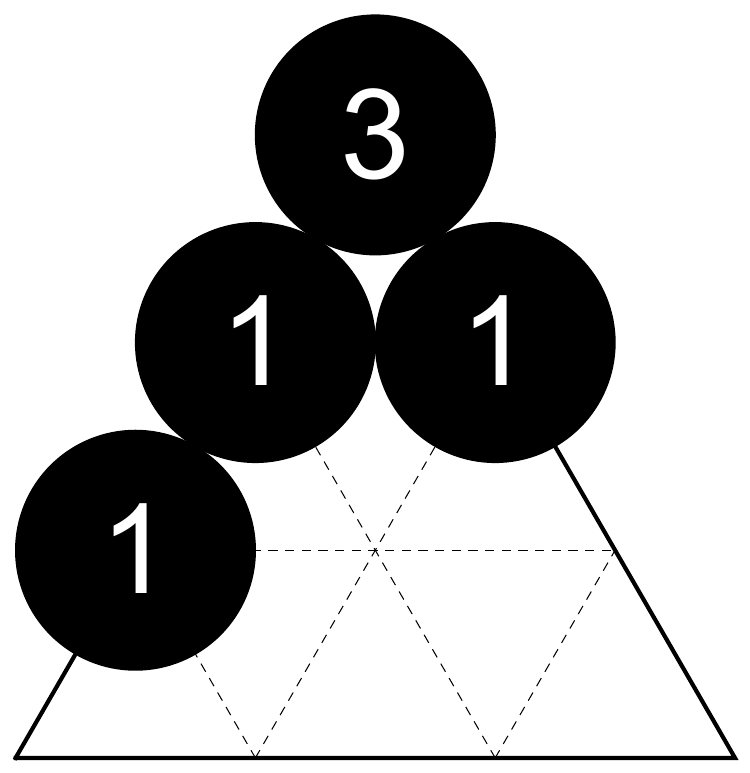}} \quad
\subfloat[9]{\includegraphics[width=1.35cm]{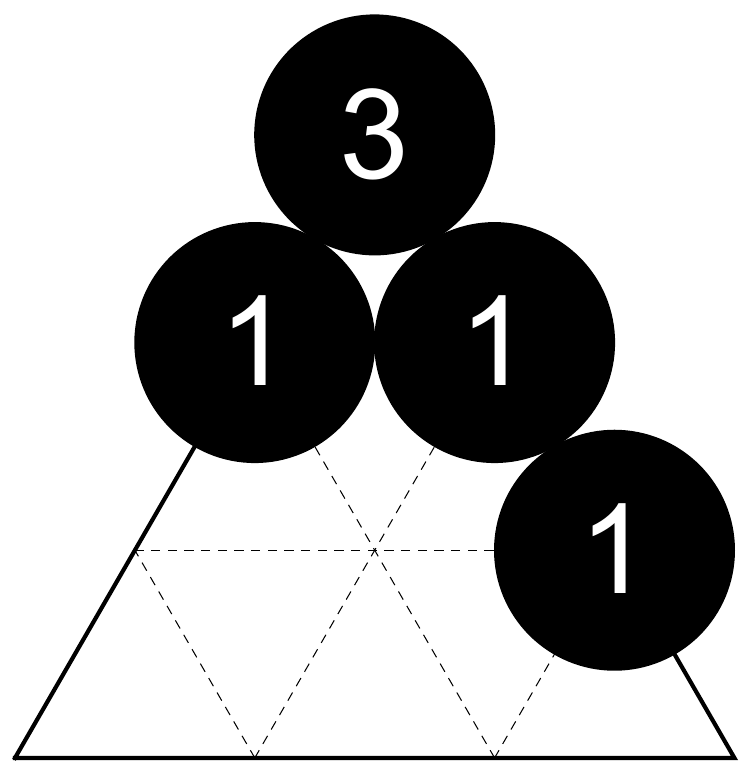}} \quad
\subfloat[10]{\includegraphics[width=1.65cm]{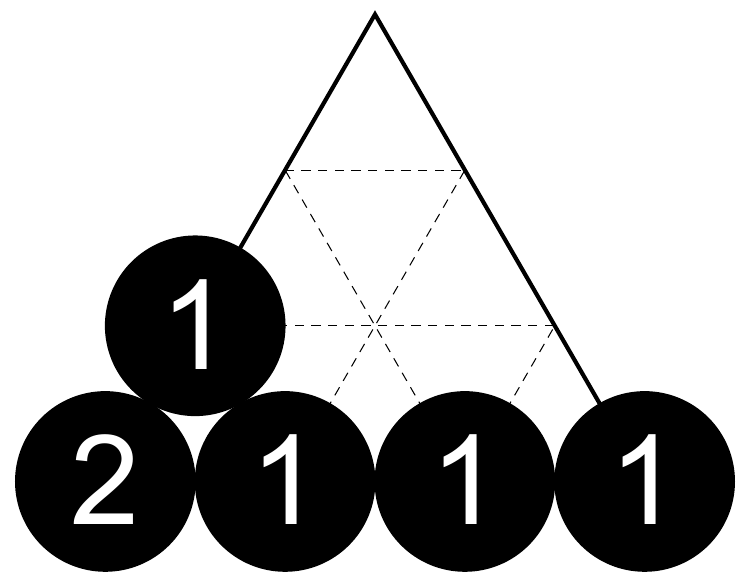}} \quad
\subfloat[11]{\includegraphics[width=1.5cm]{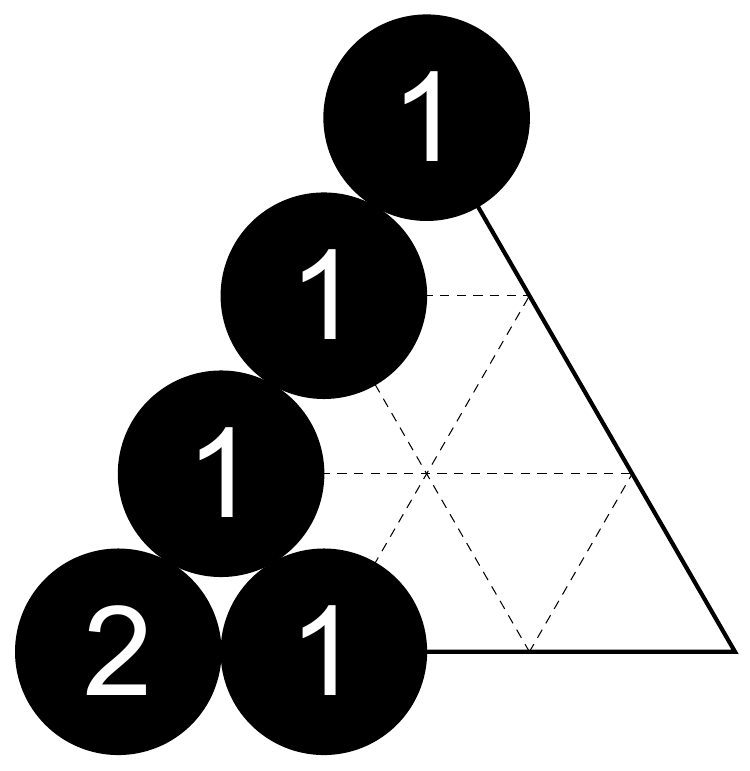}} \quad
\subfloat[12]{\includegraphics[width=1.5cm]{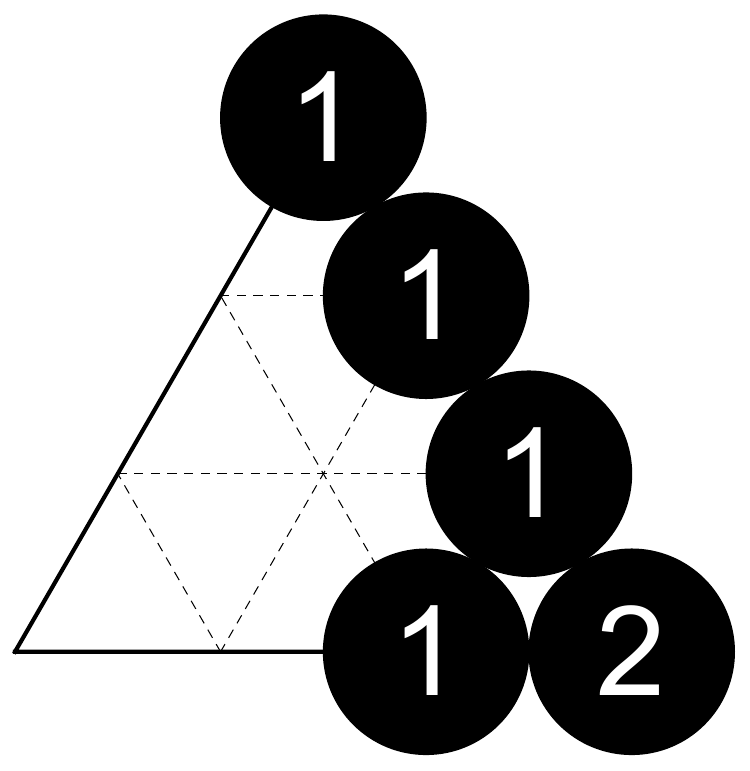}} \quad
\subfloat[13]{\includegraphics[width=1.65cm]{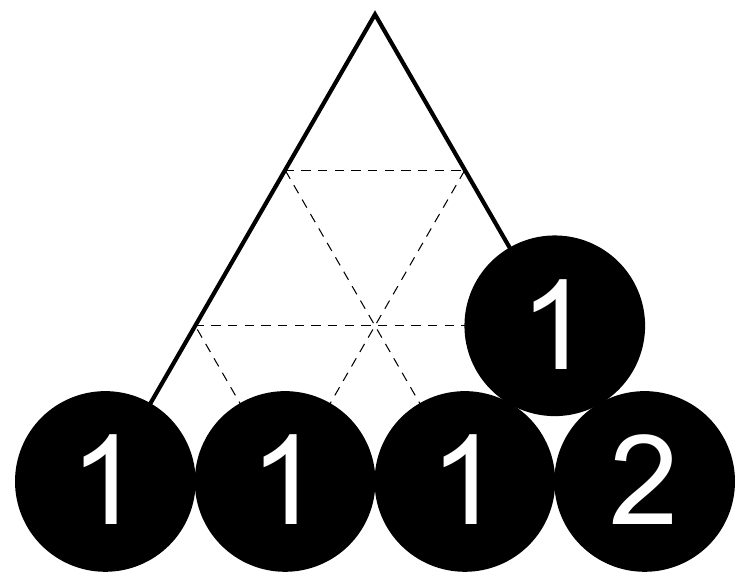}} \quad
\subfloat[14]{\includegraphics[width=1.5cm]{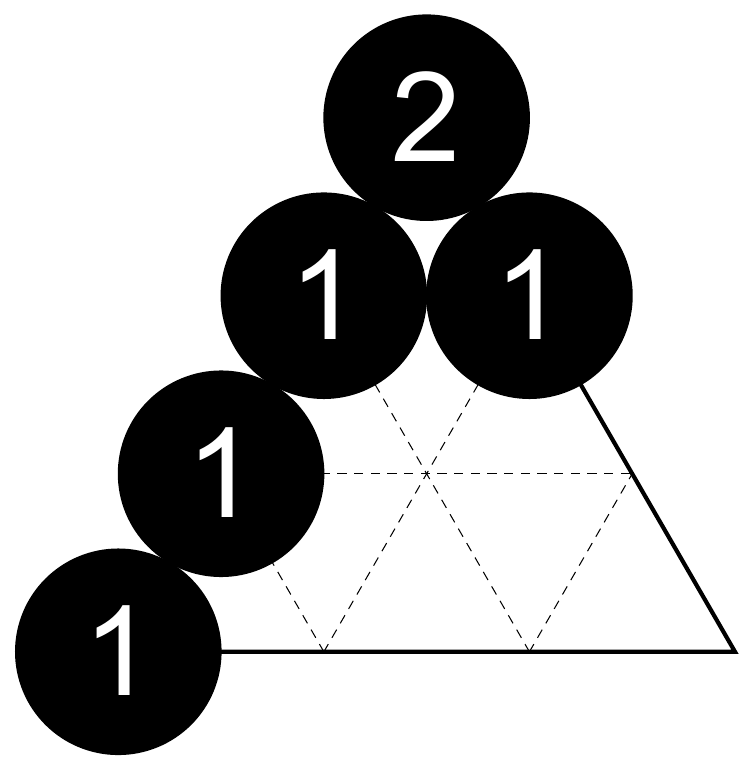}} \hspace*{-0.7cm}
\\
\subfloat[15]{\includegraphics[width=1.5cm]{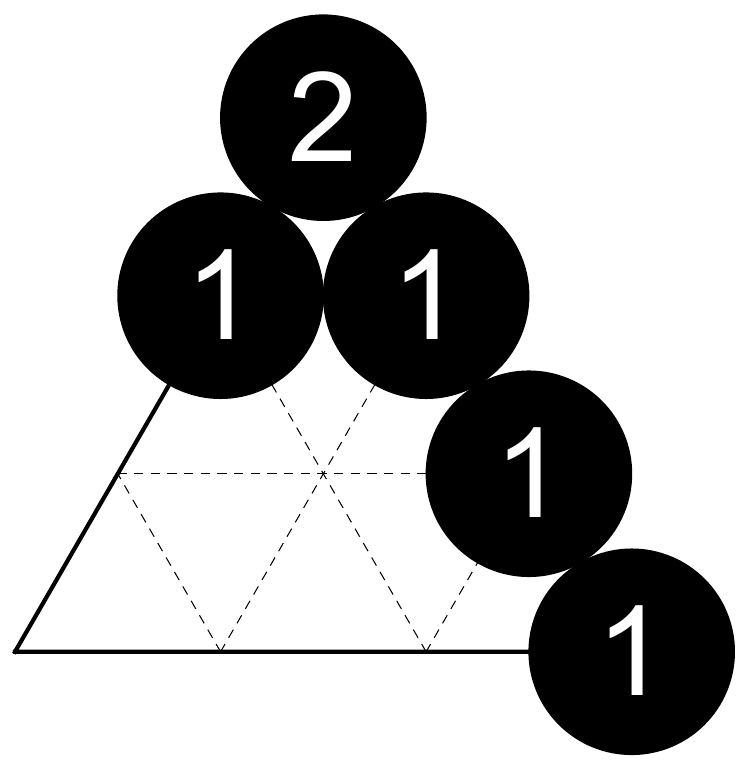}} \quad
\subfloat[16]{\includegraphics[width=1.5cm]{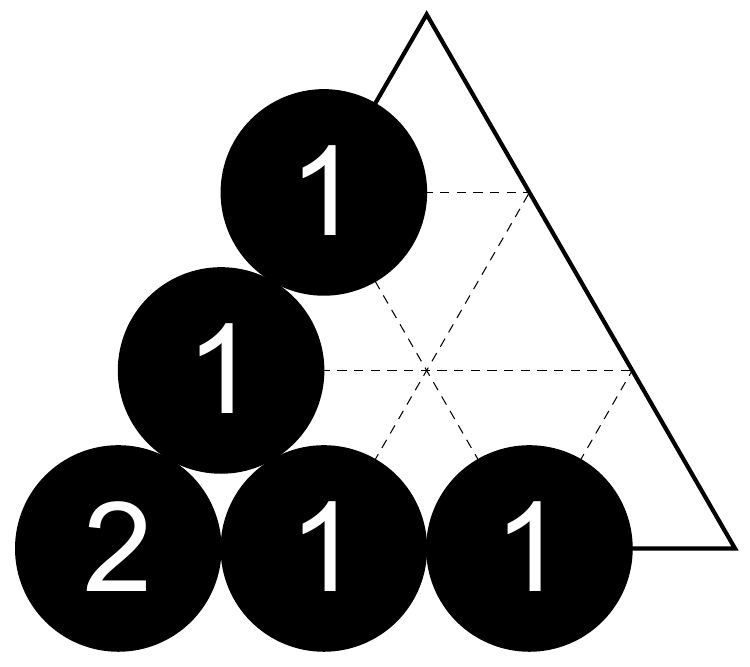}} \quad
\subfloat[17]{\includegraphics[width=1.5cm]{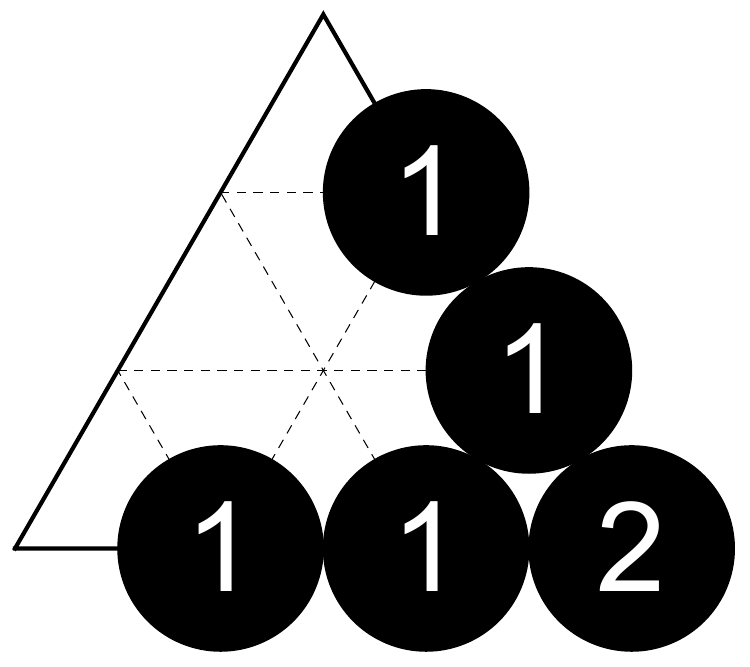}} \quad
\subfloat[18]{\includegraphics[width=1.3cm]{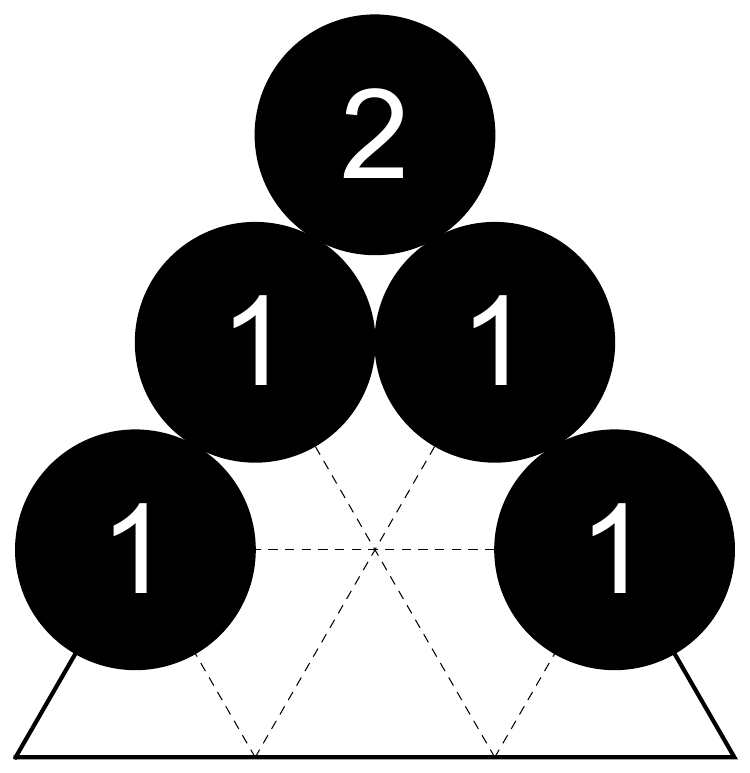}} \quad
\subfloat[19]{\includegraphics[width=1.65cm]{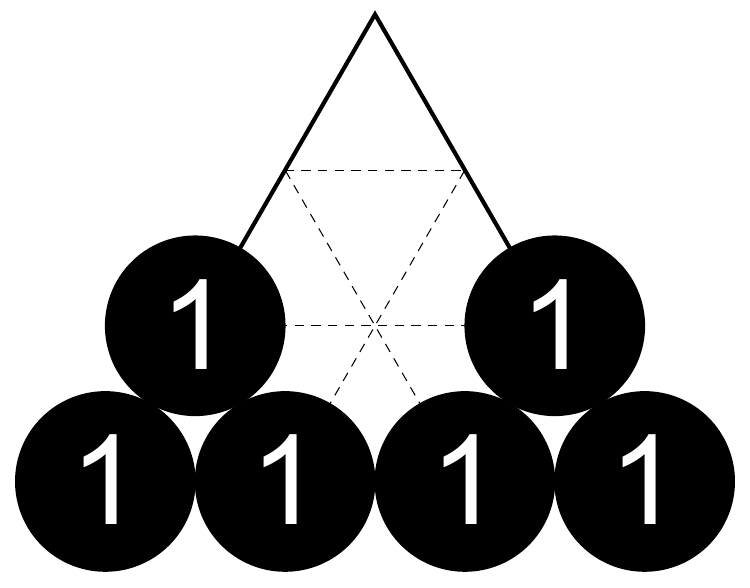}} \quad
\subfloat[20]{\includegraphics[width=1.5cm]{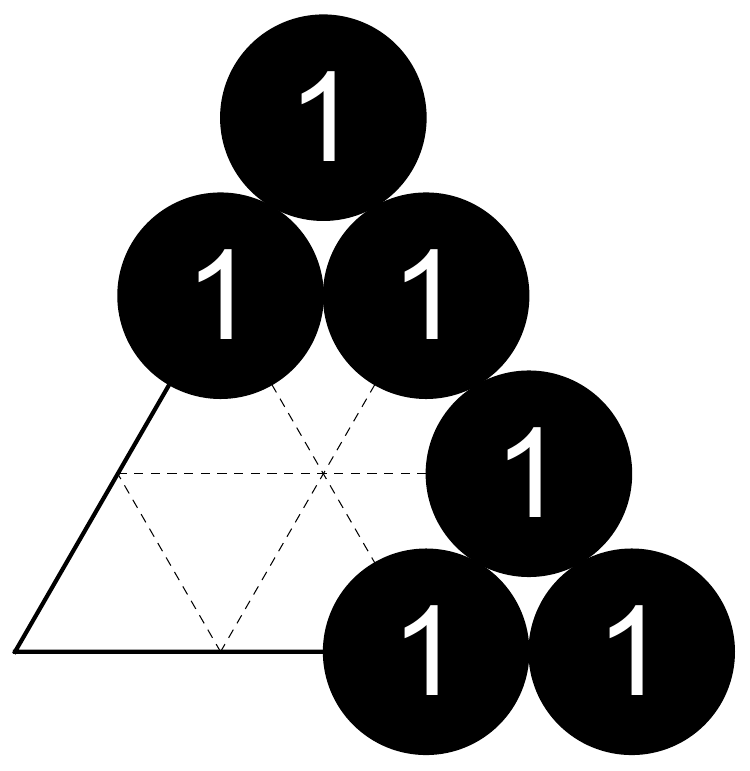}} \quad
\subfloat[21]{\includegraphics[width=1.5cm]{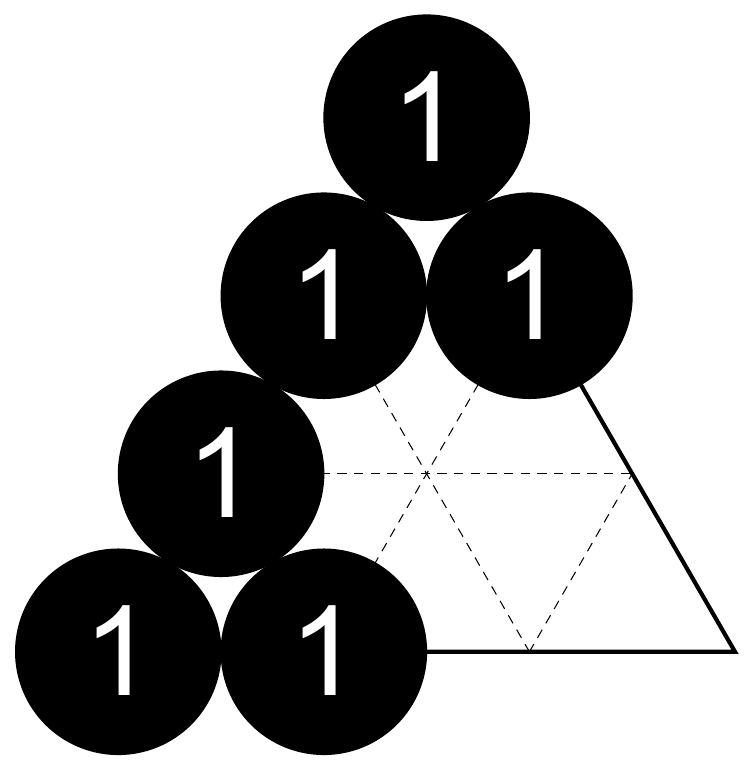}} \hspace*{-0.7cm}
\\
\subfloat[22]{\includegraphics[width=1.5cm]{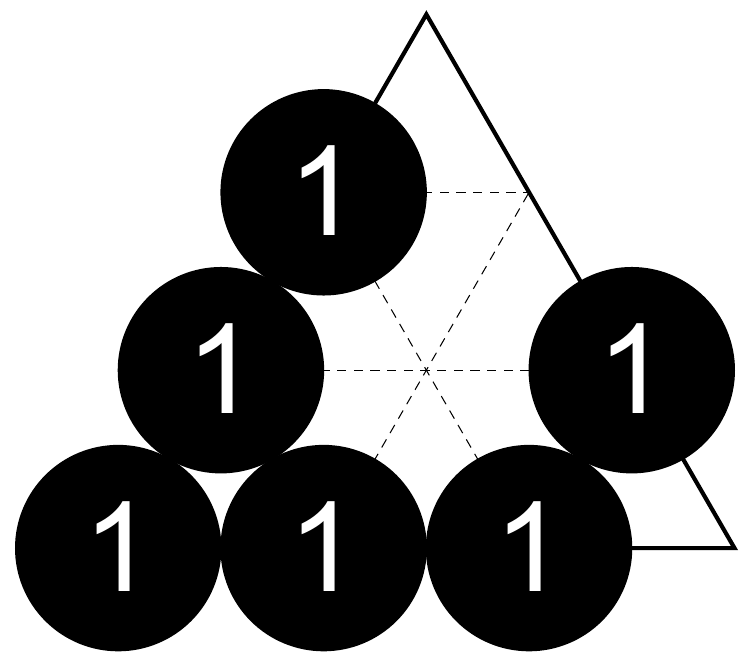}} \quad
\subfloat[23]{\includegraphics[width=1.5cm]{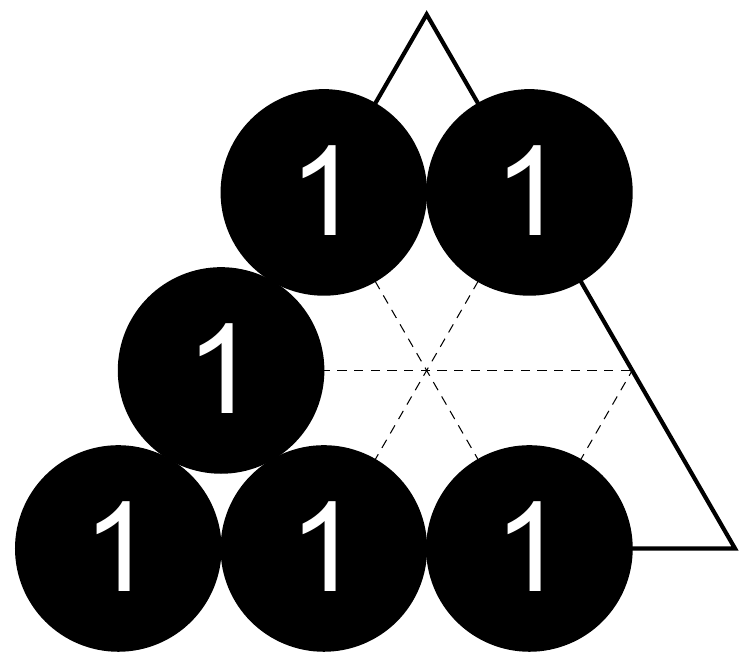}} \quad
\subfloat[24]{\includegraphics[width=1.5cm]{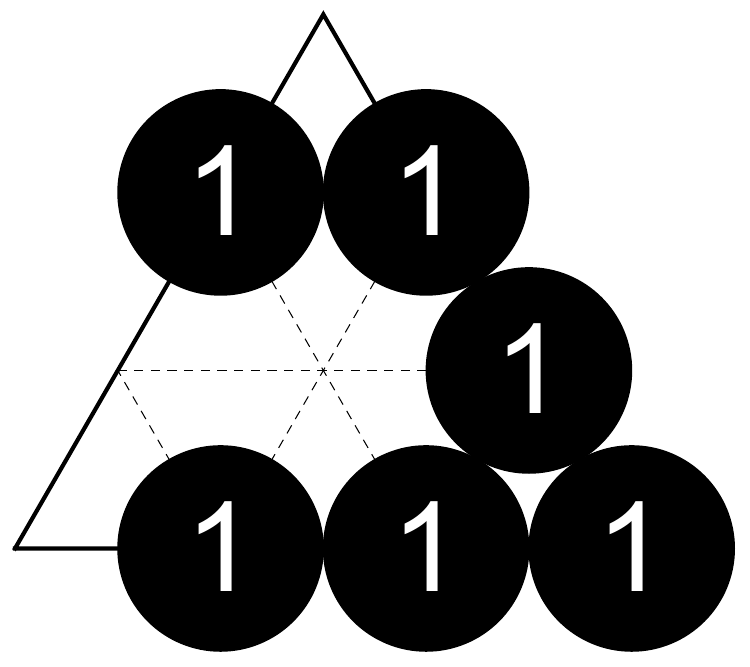}} \quad
\subfloat[25]{\includegraphics[width=1.5cm]{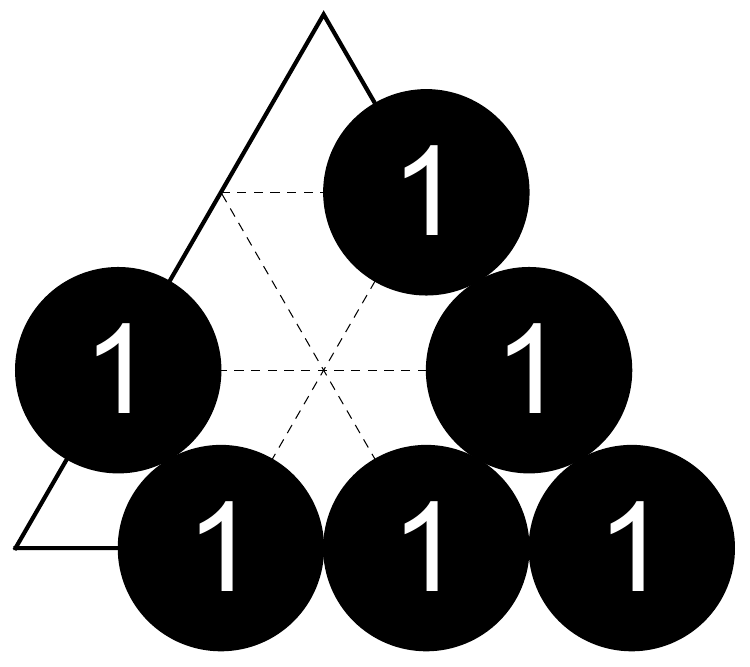}} \quad
\subfloat[26]{\includegraphics[width=1.3cm]{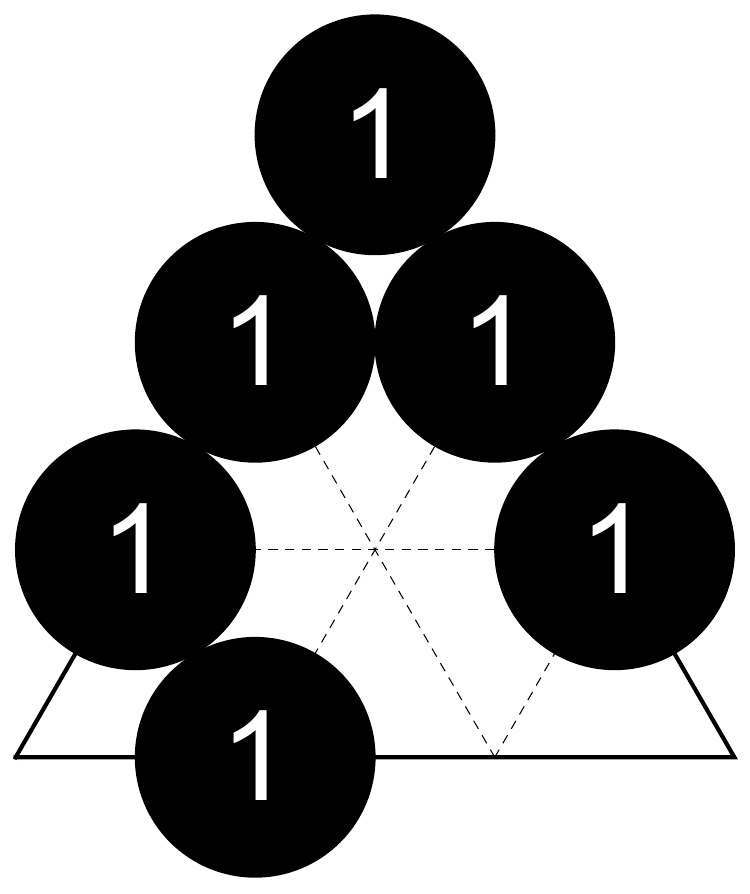}} \quad
\subfloat[27]{\includegraphics[width=1.3cm]{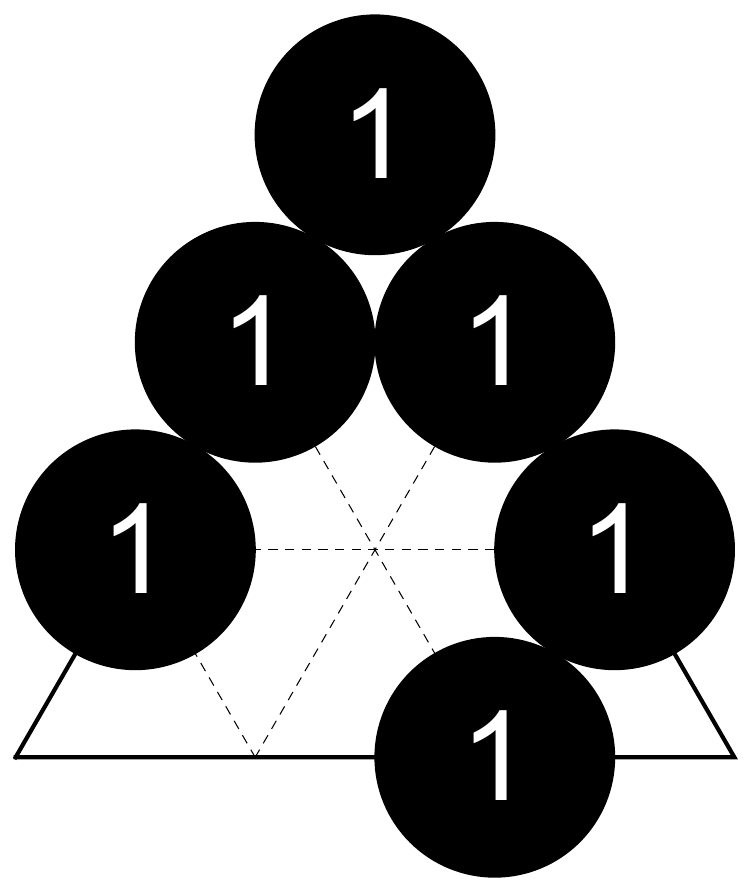}} \quad
\subfloat[28]{\includegraphics[width=1.3cm]{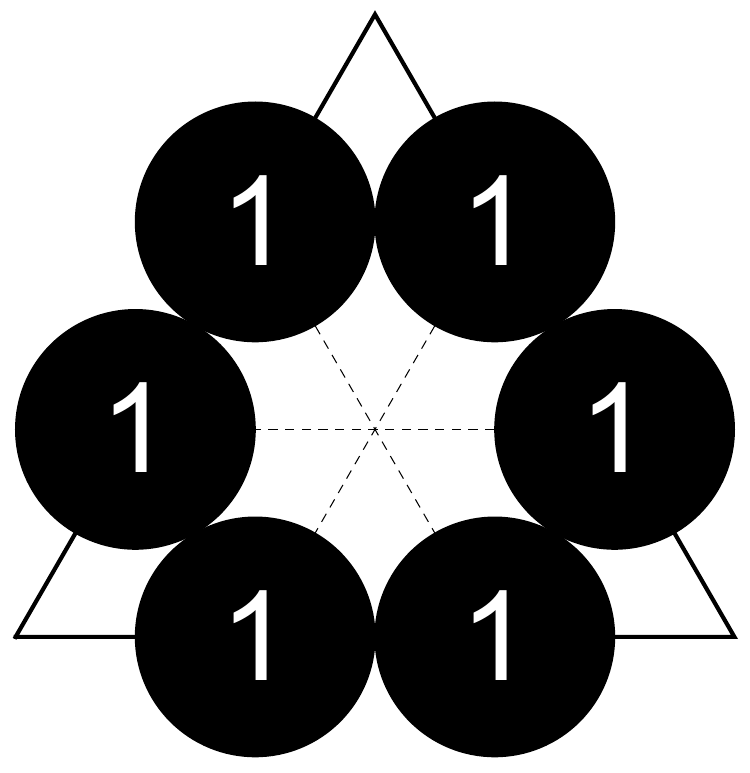}} \hspace*{-0.7cm}
\caption{Knot configurations of the considered simplex spline basis functions for $\spS_3^2(\Delta_{\WS})$. Each black disc shows the position of a knot and the number inside indicates its multiplicity.}
\label{fig:allknots}
\end{figure}
For a given triangle $\Delta:=\langle\vp_1,\vp_2,\vp_3\rangle$, we consider the $28$ cubic simplex splines schematically illustrated in Figure~\ref{fig:allknots}, where each simplex spline has six (including multiplicity) knots chosen among the three vertices $\vp_1,\vp_2,\vp_3$ and the six points in \eqref{eq:knot-points}. The lines in the complete graph of the set of knots are called knot lines. 
According to the properties of simplex splines (see \cites{Micchelli79,Prautsch.Boehm.Paluszny02} for more details), each of the considered functions 
\begin{itemize}
\item is supported on the convex hull of its knots;
\item is a polynomial of degree $d=3$ on each region of the partition $\Delta_\WS$;
\item is of class $C^{4-\mu}$ across a knot line, where $\mu$ is the number of knots on that knot line, including multiplicities.
\end{itemize} 
After multiplying the above mentioned simplex splines by the scaling factors
$$
{\small
\vw:=\frac{\abs{\Delta}}{15}\biggl[\frac{1}{6},\frac{1}{6},
  \frac{1}{6},\frac{1}{3},\frac{1}{3},\frac{1}{3},\frac{1}{3},
  \frac{1}{3},\frac{1}{3},\frac{1}{2},\frac{1}{2},\frac{1}{2},
  \frac{1}{2},\frac{1}{2},\frac{1}{2},\frac{2}{3},\frac{2}{3},
  \frac{2}{3},\frac{5}{6},\frac{5}{6},\frac{5}{6},\frac{2}{3},
  \frac{2}{3},\frac{2}{3},\frac{2}{3},\frac{2}{3},\frac{2}{3},1
\biggr],}
$$
where $|\Delta|$ denotes the area of $\Delta$, 
we obtain the vector of $28$ functions 
\begin{equation}
\label{eq:basisB}
\basisB := [B_1,\ldots,B_{28}],
\end{equation}
which forms a nonnegative partition-of-unity basis
for the space $\spS_3^2(\Delta_{\WS})$; see \cite[Theorem~3]{LycheMS22}. Some of the basis functions are depicted in Figures~\ref{fig:B1-B4-B10}--\ref{fig:B28}. The remaining ones can be obtained by symmetry.

\begin{figure}[t!]
\centering
\includegraphics[width=3.8cm]{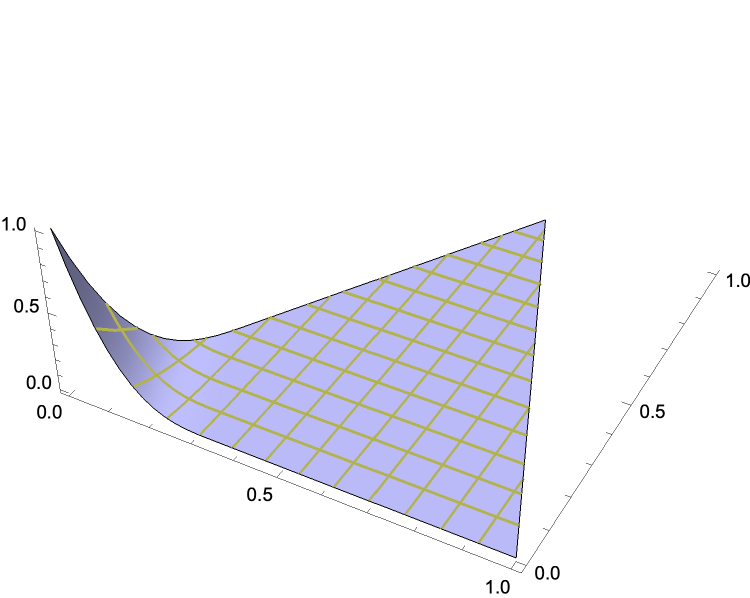}
\includegraphics[width=3.8cm]{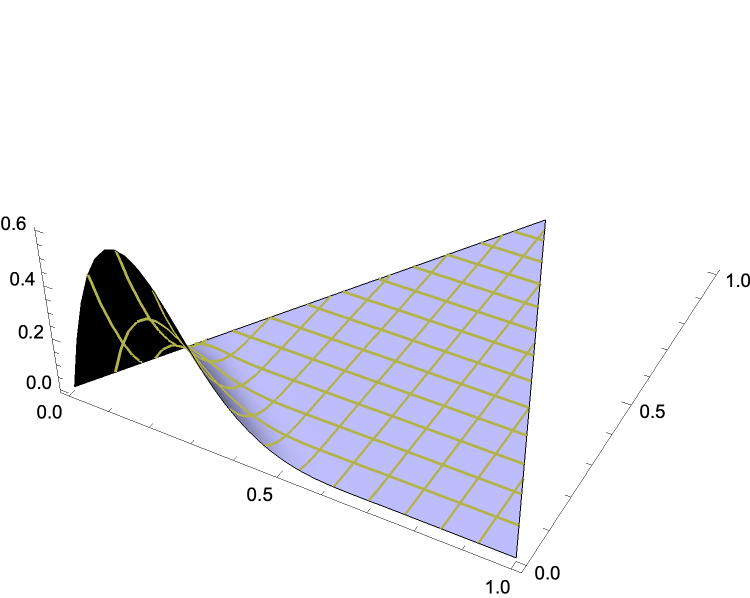}
\includegraphics[width=3.8cm]{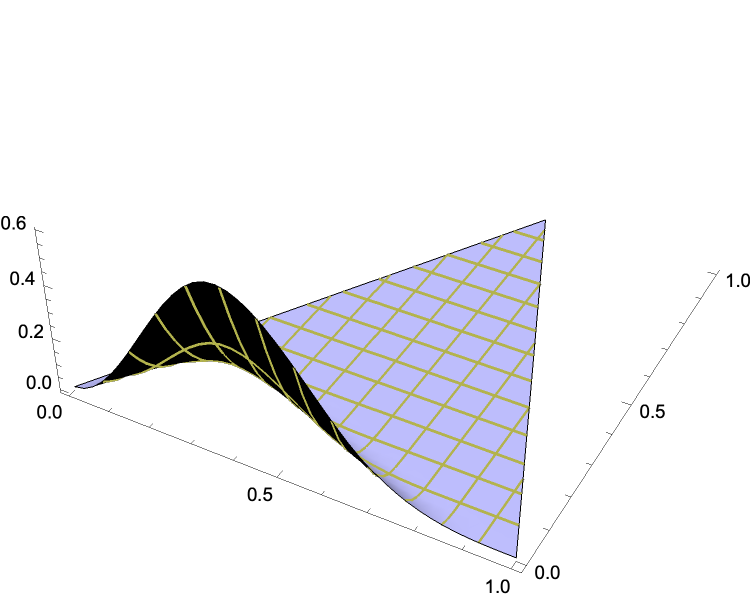}
\caption{The simplex spline basis functions $B_1$, $B_4$, and $B_{10}$.}
\label{fig:B1-B4-B10}
\medskip
\centering
\includegraphics[width=3.8cm]{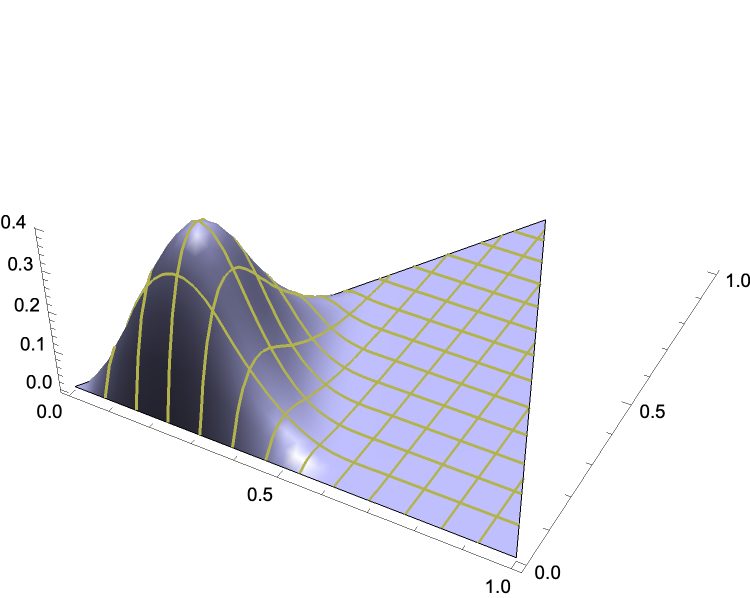}
\includegraphics[width=3.8cm]{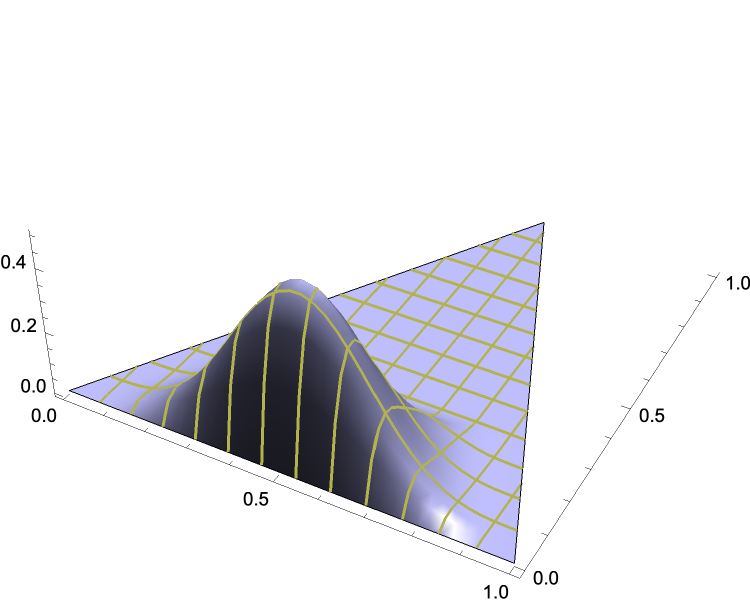}
\includegraphics[width=3.8cm]{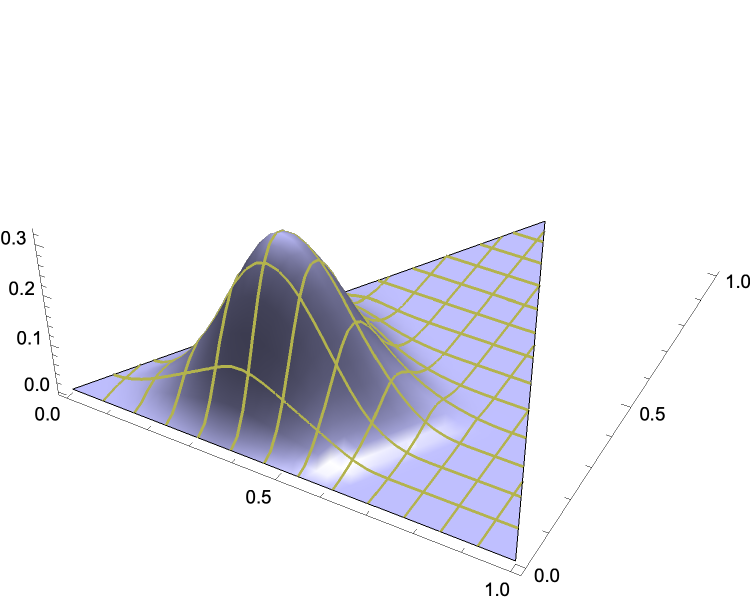}
\caption{The simplex spline basis functions $B_{16}$, $B_{19}$, and $B_{22}$.}
\label{fig:B16-B19-B22}
\medskip
\centering
\includegraphics[width=3.8cm]{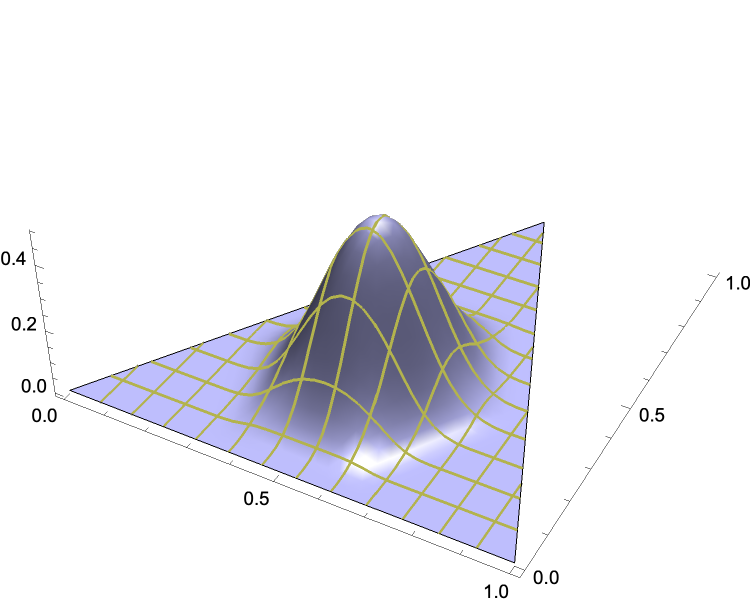}
\caption{The simplex spline basis function $B_{28}$.}
\label{fig:B28}
\end{figure}

For the exposition in the next sections, it is useful to analyze the structure of the collocation matrix, denoted by $\matC$, of the basis \eqref{eq:basisB} for the Hermite interpolation problem stated in Proposition~\ref{pro:hermite}. Let us consider the following ordering of the Hermite interpolation conditions by defining the linear functionals $\lambda_i$, $ i=1,\ldots, 28$. The first functionals
$\lambda_1,\ldots,\lambda_{18}$ are related to the vertices,
\begin{equation} \label{eq:lambda-1}
\begin{alignedat}{4}
\lambda_1(f)&:=f(\vp_1), &\quad \lambda_2(f)&:=f(\vp_2), &\quad \lambda_3(f)&:=f(\vp_3),\\
\lambda_4(f)&:=D_{x}f(\vp_1), & \quad
\lambda_5(f)&:=D_{y}f(\vp_1), & \quad
\lambda_6(f)&:=D_{x}f(\vp_2), \\
\lambda_7(f)&:=D_{y}f(\vp_2), &
\lambda_8(f)&:=D_{x}f(\vp_3), &
\lambda_9(f)&:=D_{y}f(\vp_3), \\
\lambda_{10}(f)&:=D_{x}^2f(\vp_1), & \quad 
\lambda_{11}(f)&:=D_{y}^2f(\vp_1), & \quad 
\lambda_{12}(f)&:=D_{x}^2f(\vp_2), \\ 
\lambda_{13}(f)&:=D_{y}^2f(\vp_2), & 
\lambda_{14}(f)&:=D_{x}^2f(\vp_3), & 
\lambda_{15}(f)&:=D_{y}^2f(\vp_3), \\ 
\lambda_{16}(f)&:=D_{x}D_{y}f(\vp_1), &
\lambda_{17}(f)&:=D_{x}D_{y}f(\vp_2), &
\lambda_{18}(f)&:=D_{x}D_{y}f(\vp_3);
\end{alignedat}
\end{equation}
$\lambda_{19},\ldots,\lambda_{27}$ are related to the edges,
\begin{equation} \label{eq:lambda-2}
\begin{alignedat}{4}
\lambda_{19}(f)&:=D_{\vn_3}f(\vq_3), &
\lambda_{20}(f)&:=D_{\vn_1}f(\vq_1),&
\lambda_{21}(f)&:=D_{\vn_2}f(\vq_2), \\
\lambda_{22}(f)&:=D_{\vn_3}^2f(\vp_{3,1}), & 
\lambda_{23}(f)&:=D_{\vn_2}^2f(\vp_{2,1}), & 
\lambda_{24}(f)&:=D_{\vn_1}^2f(\vp_{1,2}), \\ 
\lambda_{25}(f)&:=D_{\vn_3}^2f(\vp_{3,2}), & \quad 
\lambda_{26}(f)&:=D_{\vn_2}^2f(\vp_{2,3}), & \quad 
\lambda_{27}(f)&:=D_{\vn_1}^2f(\vp_{1,3}); 
\end{alignedat}
\end{equation}
and the final $\lambda_{28}$ is related to the triangle,
\begin{equation}\label{eq:lambda-3}
\lambda_{28}(f) :=f(\vq).
\end{equation}
Then, the collocation matrix $\matC$ is formed by
$$
c_{i,j}:=\lambda_i(B_j), \quad i,j=1,\dots, 28.
$$
From the support and the smoothness properties of simplex splines, taking into account the knot configurations in Figure~\ref{fig:allknots}, we deduce the sparsity structure of $\matC$ depicted in Table~\ref{tab:hermiteB}, where the symbols {\entry} stand for possible nonzero entries. The exact values of the entries of $\matC$ can be obtained via standard elementary calculus by considering the values in \cite[Tables~3--5]{LycheMS22}; see also Example~\ref{example}. Proposition~\ref{pro:hermite} implies that $\matC$ is not singular.

\begin{remark}
 The collocation matrix presented in \cite[Tables~3--5]{LycheMS22} is defined in terms of linear functionals $\rho_i$, $i=1,\ldots,28$. These functionals are strongly related to the $\lambda_i$, $i=1,\ldots,28$ in \eqref{eq:lambda-1}--\eqref{eq:lambda-3} but their derivatives are taken in triangle-dependent directions instead of axis-aligned directions or edge-normal directions.
\end{remark}

\begin{examples}\label{example}
Let us consider the triangle with vertices $(0,0)$, $(h,0)$, and $(0,h)$. The entries of the corresponding collocation matrix $\matC$ are expressed in Table~\ref{tab:hermiteB2-reference-tri-h}.
\end{examples}

\subsection{Reduced spaces}

With the aim of achieving a parsimonious approach to construct $C^2$ cubic spline functions on a given triangulation $\cT$, we are interested in defining subspaces of $\spS_3^2(\cT_{\WS})$
\begin{itemize}
 \item that contain $\spP_3$, which is a necessary condition for attaining full approximation order;
 \item whose elements can be locally identified by assigning Hermite degrees of freedom only to the vertices and edges of $\cT$, or even better, only to the vertices of $\cT$;
 \item that allow for a simple local spline representation on each $\Delta$ of $\cT$, to avoid the need of working with separate polynomial pieces on each polygonal subregion of $\cT_{\WS}$.
\end{itemize}
We first develop a framework for constructing such subspaces of $\spS_3^2(\Delta_\WS)$ in Section~\ref{sec:reduced-spaces-local} and then we extend the framework to $\spS_3^2(\cT_{\WS})$ in Section~\ref{sec:reduced-spaces-global}.

\begin{landscape}
\begin{table}[t!]
\renewcommand{\arraystretch}{1.3}
\centering{\scriptsize
\setlength{\tabcolsep}{3.5pt}
\begin{tabular}{|c|ccc|cccccc|ccccccccc|ccc|cccccc|c|}
\hline
& $B_1$ & $B_2$ & $B_3$ & $B_4$ & $B_5$ & $B_6$ & $B_7$ & $B_8$ & $B_9$ & $B_{10}$
& $B_{11}$ & $B_{12}$ & $B_{13}$ & $B_{14}$ & $B_{15}$ & $B_{16}$ & $B_{17}$ & $B_{18}$ & $B_{19}$ &$B_{20}$ &$B_{21}$ & $B_{22}$ &$B_{23}$ &$B_{24}$ & $B_{25}$ &$B_{26}$ &$B_{27}$ & $B_{28}$ 
\\
\hline
$\lambda_{1}$ & \entry & 0 & 0 & 0 & 0 & 0 & 0 & 0 & 0 & 0 & 0 & 0 & 0 & 0 & 0 & 0 & 0 & 0 & 0 & 0 & 0 & 0 & 0 & 0 & 0 & 0 & 0 & 0 \\
$\lambda_{2}$ & 0 & \entry & 0 & 0 & 0 & 0 & 0 & 0 & 0 & 0 & 0 & 0 & 0 & 0 & 0 & 0 & 0 & 0 & 0 & 0 & 0 & 0 & 0 & 0 & 0 & 0 & 0 & 0 \\
$\lambda_{3}$ & 0 & 0 & \entry & 0 & 0 & 0 & 0 & 0 & 0 & 0 & 0 & 0 & 0 & 0 & 0 & 0 & 0 & 0 & 0 & 0 & 0 & 0 & 0 & 0 & 0 & 0 & 0 & 0 \\
\hline
$\lambda_{4}$ & \entry & 0 & 0 & \entry & \entry & 0 & 0 & 0 & 0 & 0 & 0 & 0 & 0 & 0 & 0 & 0 & 0 & 0 & 0 & 0 & 0 & 0 & 0 & 0 & 0 & 0 & 0 & 0 \\
$\lambda_{5}$ & \entry & 0 & 0 & \entry & \entry & 0 & 0 & 0 & 0 & 0 & 0 & 0 & 0 & 0 & 0 & 0 & 0 & 0 & 0 & 0 & 0 & 0 & 0 & 0 & 0 & 0 & 0 & 0 \\
$\lambda_{6}$ & 0 & \entry & 0 & 0 & 0 & \entry & \entry & 0 & 0 & 0 & 0 & 0 & 0 & 0 & 0 & 0 & 0 & 0 & 0 & 0 & 0 & 0 & 0 & 0 & 0 & 0 & 0 & 0 \\
$\lambda_{7}$ & 0 & \entry & 0 & 0 & 0 & \entry & \entry & 0 & 0 & 0 & 0 & 0 & 0 & 0 & 0 & 0 & 0 & 0 & 0 & 0 & 0 & 0 & 0 & 0 & 0 & 0 & 0 & 0 \\ 
$\lambda_{8}$ & 0 & 0 & \entry & 0 & 0 & 0 & 0 & \entry & \entry & 0 & 0 & 0 & 0 & 0 & 0 & 0 & 0 & 0 & 0 & 0 & 0 & 0 & 0 & 0 & 0 & 0 & 0 & 0 \\
$\lambda_{9}$ & 0 & 0 & \entry & 0 & 0 & 0 & 0 & \entry & \entry & 0 & 0 & 0 & 0 & 0 & 0 & 0 & 0 & 0 & 0 & 0 & 0 & 0 & 0 & 0 & 0 & 0 & 0 & 0 \\
\hline
$\lambda_{10}$& \entry & 0 & 0 & \entry & \entry & 0 & 0 & 0 & 0 & \entry & \entry & 0 & 0 & 0 & 0 & \entry & 0 & 0  & 0 & 0 & 0 & 0 & 0 & 0 & 0 & 0 & 0 & 0\\
$\lambda_{11}$& \entry & 0 & 0 & \entry & \entry & 0 & 0 & 0 & 0 & \entry & \entry & 0 & 0 & 0 & 0 & \entry & 0 & 0 & 0 & 0 & 0 & 0 & 0 & 0 & 0 & 0 & 0 & 0  \\
$\lambda_{12}$& 0 & \entry & 0 & 0 & 0 & \entry & \entry & 0 & 0 & 0 & 0 & \entry & \entry & 0 & 0 & 0 & \entry & 0 & 0 & 0 & 0 & 0 & 0 & 0 & 0 & 0 & 0 & 0  \\
$\lambda_{13}$& 0 & \entry & 0 & 0 & 0 & \entry & \entry & 0 & 0 & 0 & 0 & \entry & \entry & 0 & 0 & 0 & \entry & 0 & 0 & 0 & 0 & 0 & 0 & 0 & 0 & 0 & 0 & 0 \\
$\lambda_{14}$& 0 & 0 & \entry & 0 & 0 & 0 & 0 & \entry & \entry & 0 & 0 & 0 & 0 & \entry & \entry & 0 & 0 & \entry & 0 & 0 & 0 & 0 & 0 & 0 & 0 & 0 & 0 & 0 \\
$\lambda_{15}$& 0 & 0 & \entry & 0 & 0 & 0 & 0 & \entry & \entry & 0 & 0 & 0 & 0 & \entry & \entry & 0 & 0 & \entry & 0 & 0 & 0 & 0 & 0 & 0 & 0 & 0 & 0 & 0  \\
$\lambda_{16}$& \entry & 0 & 0 & \entry & \entry & 0 & 0 & 0 & 0 & \entry & \entry & 0 & 0 & 0 & 0 & \entry & 0 & 0 & 0 & 0 & 0 & 0 & 0 & 0 & 0 & 0 & 0 & 0 \\
$\lambda_{17}$& 0 & \entry & 0 & 0 & 0 & \entry & \entry & 0 & 0 & 0 & 0 & \entry & \entry & 0 & 0 & 0 & \entry & 0 & 0 & 0 & 0 & 0 & 0 & 0 & 0 & 0 & 0 & 0 \\
$\lambda_{18}$& 0 & 0 & \entry & 0 & 0 & 0 & 0 & \entry & \entry & 0 & 0 & 0 & 0 & \entry & \entry & 0 & 0 & \entry & 0 & 0 & 0 & 0 & 0 & 0 & 0 & 0 & 0 & 0\\
\hline
$\lambda_{19}$& 0 & 0 & 0 & \entry & 0 & 0 & \entry & 0 & 0 & \entry & 0 & 0 & \entry & 0 & 0 & \entry & \entry & 0 & \entry & 0 & 0 & 0 & 0 & 0 & 0 & 0 & 0 & 0  \\
$\lambda_{20}$& 0 & 0 & 0 & 0 & 0 & \entry & 0 & 0 & \entry & 0 & 0 & \entry & 0 & 0 & \entry & 0 & \entry & \entry & 0 & \entry & 0 & 0 & 0 & 0 & 0 & 0 & 0 & 0\\
$\lambda_{21}$& 0 & 0 & 0 & 0 & \entry & 0 & 0 & \entry & 0 & 0 & \entry & 0 & 0 & \entry & 0 & \entry & 0 & \entry & 0 & 0 & \entry & 0 & 0 & 0 & 0 & 0 & 0 & 0\\
\hline
$\lambda_{22}$& 0 & 0 & 0 & \entry & 0 & 0 & 0 & 0 & 0 & \entry & 0 & 0 & \entry & 0 & 0 & \entry & 0 & 0 & \entry & 0 & 0 & \entry & \entry & 0 & 0 & 0 & 0 & 0\\
$\lambda_{23}$& 0 & 0 & 0 & 0 & \entry & 0 & 0 & 0 & 0 & 0 & \entry & 0 & 0 & \entry & 0 & \entry & 0 & 0 & 0 & 0 & \entry & \entry & \entry & 0 & 0 & 0 & 0 & 0\\
$\lambda_{24}$& 0 & 0 & 0 & 0 & 0 & \entry & 0 & 0 & 0 & 0 & 0 & \entry & 0 & 0 & \entry & 0 & \entry & 0 & 0 & \entry & 0 & 0 & 0 & \entry & \entry & 0 & 0 & 0\\
$\lambda_{25}$& 0 & 0 & 0 & 0 & 0 & 0 & \entry & 0 & 0 & \entry & 0 & 0 & \entry & 0 & 0 & 0 & \entry & 0 & \entry & 0 & 0 & 0 & 0 & \entry & \entry & 0 & 0 & 0\\
$\lambda_{26}$& 0 & 0 & 0 & 0 & 0 & 0 & 0 & \entry & 0 & 0 & \entry & 0 & 0 & \entry & 0 & 0 & 0 & \entry & 0 & 0 & \entry & 0 & 0 & 0 & 0 & \entry & \entry & 0 \\
$\lambda_{27}$& 0 & 0 & 0 & 0 & 0 & 0 & 0 & 0 & \entry & 0 & 0 & \entry & 0 & 0 & \entry & 0 & 0 & \entry & 0 & \entry & 0 & 0 & 0 & 0 & 0 & \entry & \entry & 0\\
\hline
$\lambda_{28}$& 0 & 0 & 0 & 0 & 0 & 0 & 0 & 0 & 0 & 0 & 0 & 0 & 0 & 0 & 0 & 0 & 0 & 0 & 0 & 0 & 0 & \entry & \entry & \entry & \entry & \entry & \entry & \entry\\
\hline
\end{tabular}}
\caption{General sparsity pattern of $\lambda_i(B_j)$ for $i,j=1,\ldots,28$, where $\lambda_i$ is defined in \eqref{eq:lambda-1}, \eqref{eq:lambda-2}, and \eqref{eq:lambda-3}. }
\label{tab:hermiteB}
\end{table}
\end{landscape}
\begin{landscape}
\begin{table}[t!]
\renewcommand{\arraystretch}{1.25}
\centering{\scriptsize
\setlength{\tabcolsep}{2.5pt}
\begin{tabular}{|c|ccc|cccccc|ccccccccc|ccc|cccccc|c|}
\hline
&$B_1$&$B_2$&$B_3$&$B_4$&$B_5$&$B_6$&$B_7$&$B_8$&$B_9$&$B_{10}$&$B_{11}$&$B_{12}$&$B_{13}$&$B_{14}$&$B_{15}$&$B_{16}$&$B_{17}$&$B_{18}$&$B_{19}$&$B_{20}$&$B_{21}$&$B_{22}$&$B_{23}$&$B_{24}$&$B_{25}$&$B_{26}$&$B_{27}$&$B_{28}$\\
\hline	
$\lambda_1$& 1 & 0 & 0 & 0 & 0 & 0 & 0 & 0 & 0 & 0 & 0 & 0 & 0 & 0 & 0 & 0 & 0 & 0 & 0 & 0 & 0 & 0 & 0 & 0 & 0 & 0 & 0 & 0 \\
$\lambda_2$ &0 & 1 & 0 & 0 & 0 & 0 & 0 & 0 & 0 & 0 & 0 & 0 & 0 & 0 & 0 & 0 & 0 & 0 & 0 & 0 & 0 & 0 & 0 & 0 & 0 & 0 & 0 & 0 \\
$\lambda_3$& 0 & 0 & 1 & 0 & 0 & 0 & 0 & 0 & 0 & 0 & 0 & 0 & 0 & 0 & 0 & 0 & 0 & 0 \
& 0 & 0 & 0 & 0 & 0 & 0 & 0 & 0 & 0 & 0 \\
\hline
$\lambda_4$&-$\frac{9}{h}$ & 0 & 0 & $\frac{9}{h}$ & 0 & 0 & 0 & 0 & 0 & 0 & 0 & 0 & 0 & 0& 0 & 0 & 0 & 0 & 0 & 0 & 0 & 0 & 0 & 0 & 0 & 0 & 0 & 0 \\
$\lambda_5$&-$\frac{9}{h}$ & 0 & 0 & 0 & $\frac{9}{h}$ & 0 & 0 & 0 & 0 & 0 & 0 & 0 & 0 & 0  & 0 & 0 & 0 & 0 & 0 & 0 & 0 & 0 & 0 & 0 & 0 & 0 & 0 & 0 \\
$\lambda_6$&0 & $\frac{9}{h}$ & 0 & 0 & 0 & 0 & -$\frac{9}{h}$ & 0 & 0 & 0 & 0 & 0 & 0 & 0 & 0 & 0 & 0 & 0 & 0 & 0 & 0 & 0 & 0 & 0 & 0 & 0 & 0 & 0 \\
$\lambda_7$& 0 & 0 & 0 & 0 & 0 & $\frac{9}{h}$ & -$\frac{9}{h}$ & 0 & 0 & 0 & 0 & 0 & 0 & 0 & 0 & 0 & 0 & 0 & 0 & 0 & 0 & 0 & 0 & 0 & 0 & 0 & 0 & 0 \\
$\lambda_8$& 0 & 0 & 0 & 0 & 0 & 0 & 0 & -$\frac{9}{h}$ & $\frac{9}{h}$ & 0 & 0 & 0 & 0 & 0& 0 & 0 & 0 & 0 & 0 & 0 & 0 & 0 & 0 & 0 & 0 & 0 & 0 & 0 \\
$\lambda_9$&0 & 0 & $\frac{9}{h}$ & 0 & 0 & 0 & 0 & -$\frac{9}{h}$ & 0 & 0 & 0 & 0 & 0 & 0& 0 & 0 & 0 & 0 & 0 & 0 & 0 & 0 & 0 & 0 & 0 & 0 & 0 & 0 \\
\hline
$\lambda_{10}$& $\frac{54}{h^2}$ & 0 & 0 & -$\frac{81}{h^2}$ & 0 & 0 & 0 & 0 & 0 & $\frac{27}{h^2}$ & 0 & 0 & 0 & 0 & 0 & 0 & 0 & 0 & 0 & 0 & 0 & 0 & 0 & 0 & 0 & 0 & 0 & 0 \\
$\lambda_{11}$&$\frac{54}{h^2}$ & 0 & 0 & 0 & -$\frac{81}{h^2}$ & 0 & 0 & 0 & 0 & 0 & $\frac{27}{h^2}$ & 0 & 0 & 0 & 0 & 0 & 0 & 0 & 0 & 0 & 0 & 0 & 0 & 0 & 0 & 0 & 0 & 0 \\
$\lambda_{12}$& 0 & $\frac{54}{h^2}$ & 0 & 0 & 0 & 0 & -$\frac{81}{h^2}$ & 0 & 0 & 0 & 0 & 0 &$\frac{27}{h^2}$ & 0 & 0 & 0 & 0 & 0 & 0 & 0 & 0 & 0 & 0 & 0 & 0 & 0 & 0 &0 \\
$\lambda_{13}$& 0 & 0 & 0 & 0 & 0 & $\frac{27}{h^2}$ & $\frac{27}{h^2}$ & 0 & 0 & 0 & 0 & $\frac{27}{h^2}$ & $\frac{27}{h^2}$ & 0 & 0 & 0 &
-$\frac{108}{h^2}$ & 0 & 0 & 0 & 0 & 0 & 0 & 0 & 0 & 0 & 0 & 0 \\
$\lambda_{14}$& 0 & 0 & 0 & 0 & 0 & 0 & 0 & $\frac{27}{h^2}$ & $\frac{27}{h^2}$ & 0 & 0 & 0 & 0 & $\frac{27}{h^2}$ & $\frac{27}{h^2}$ & 0 & 0 & -$\frac{108}{h^2}$ & 0 & 0 & 0
& 0 & 0 & 0 & 0 & 0 & 0 & 0 \\
$\lambda_{15}$& 0 & 0 & $\frac{54}{h^2}$ & 0 & 0 & 0 & 0 & -$\frac{81}{h^2}$ & 0 & 0 & 0 & 0 & 0 & $\frac{27}{h^2}$ & 0 & 0 & 0 & 0 & 0 & 0 & 0 & 0 & 0 & 0 & 0 & 0 & 0 & 0 \\
$\lambda_{16}$&$\frac{54}{h^2}$ & 0 & 0 & -$\frac{54}{h^2}$ & -$\frac{54}{h^2}$ & 0 & 0 & 0 & 0& 0 & 0 & 0 & 0 & 0 & 0 & $\frac{54}{h^2}$ & 0 & 0 & 0 & 0 & 0 & 0 & 0 & 0& 0 & 0 & 0 & 0 \\
$\lambda_{17}$&0 & 0 & 0 & 0 & 0 & $\frac{54}{h^2}$ & -$\frac{27}{h^2}$ & 0 & 0 & 0 & 0 & 0 & $\frac{27}{h^2}$ & 0 & 0 & 0 & -$\frac{54}{h^2}$ & 0 & 0 & 0 & 0 & 0 & 0 & 0 & 0 & 0 & 0 & 0 \\
$\lambda_{18}$&0 & 0 & 0 & 0 & 0 & 0 & 0 & -$\frac{27}{h^2}$ & $\frac{54}{h^2}$ & 0 & 0 & 0 & 0 & $\frac{27}{h^2}$ & 0 & 0 & 0 & -$\frac{54}{h^2}$ & 0 & 0 & 0 & 0 & 0 & 0& 0 & 0 & 0 & 0 \\
\hline
$\lambda_{19}$& 0 & 0 & 0 & $\frac{9}{8 h}$ & 0 & 0 & $\frac{9}{16 h}$ & 0 & 0 & $\frac{27}{4 h}$ & 0 & 0 & $\frac{81}{16 h}$ & 0 & 0 & -$\frac{9}{8 h}$ & -$\frac{9}{8 h}$ & 0 &
-$\frac{45}{4 h}$ & 0 & 0 & 0 & 0 & 0 & 0 & 0 & 0 & 0 \\
$\lambda_{20}$&0 & 0 & 0 & 0 & 0 & $\frac{27}{16 \sqrt{2} h}$ & 0 & 0 & $\frac{27}{16\sqrt{2} h}$ & 0 & 0 & $\frac{189}{16 \sqrt{2} h}$ & 0 & 0 & $\frac{189}{16
    \sqrt{2} h}$ & 0 & -$\frac{9}{4 \sqrt{2} h}$ & -$\frac{9}{4 \sqrt{2} h}$ & 0 &
-$\frac{45}{2 \sqrt{2} h}$ & 0 & 0 & 0 & 0 & 0 & 0 & 0 & 0 \\
$\lambda_{21}$&0 & 0 & 0 & 0 & $\frac{9}{8 h}$ & 0 & 0 & $\frac{9}{16 h}$ & 0 & 0 &$\frac{27}{4 h}$ & 0 & 0 & $\frac{81}{16 h}$ & 0 & -$\frac{9}{8 h}$ & 0 &
-$\frac{9}{8 h}$ & 0 & 0 & -$\frac{45}{4 h}$ & 0 & 0 & 0 & 0 & 0 & 0 & 0 \\
\hline
$\lambda_{22}$& 0 & 0 & 0 & $\frac{54}{h^2}$ & 0 & 0 & 0 & 0 & 0 & 0 & 0 & 0 & $\frac{36}{h^2}$& 0 & 0 & -$\frac{81}{h^2}$ & 0 & 0 & -$\frac{90}{h^2}$ & 0 & 0 &
$\frac{54}{h^2}$ & $\frac{27}{h^2}$ & 0 & 0 & 0 & 0 & 0 \\
$\lambda_{23}$&0 & 0 & 0 & 0 & $\frac{54}{h^2}$ & 0 & 0 & 0 & 0 & 0 & 0 & 0 & 0 &$\frac{36}{h^2}$ & 0 & -$\frac{81}{h^2}$ & 0 & 0 & 0 & 0 & -$\frac{90}{h^2}$ &
$\frac{27}{h^2}$ & $\frac{54}{h^2}$ & 0 & 0 & 0 & 0 & 0 \\
$\lambda_{24}$&0 & 0 & 0 & 0 & 0 & $\frac{243}{4 h^2}$ & 0 & 0 & 0 & 0 & 0 & $\frac{171}{4 h^2}$ & 0 & 0 & $\frac{225}{2 h^2}$ & 0 & -$\frac{108}{h^2}$ & 0 & 0 &
-$\frac{270}{h^2}$ & 0 & 0 & 0 & $\frac{108}{h^2}$ & $\frac{54}{h^2}$ & 0 & 0 & 0 \\
$\lambda_{25}$& 0 & 0 & 0 & 0 & 0 & 0 & $\frac{27}{2 h^2}$ & 0 & 0 & $\frac{81}{h^2}$ & 0 & 0 & $\frac{63}{2 h^2}$ & 0 & 0 & 0 & -$\frac{27}{h^2}$ & 0 & -$\frac{180}{h^2}$ & 0
& 0 & 0 & 0 & $\frac{27}{h^2}$ & $\frac{54}{h^2}$ & 0 & 0 & 0 \\
$\lambda_{26}$& 0 & 0 & 0 & 0 & 0 & 0 & 0 & $\frac{27}{2 h^2}$ & 0 & 0 & $\frac{81}{h^2}$ & 0 & 0 & $\frac{63}{2 h^2}$ & 0 & 0 & 0 & -$\frac{27}{h^2}$ & 0 & 0 &
-$\frac{180}{h^2}$ & 0 & 0 & 0 & 0 & $\frac{54}{h^2}$ & $\frac{27}{h^2}$ & 0 \\
$\lambda_{27}$& 0 & 0 & 0 & 0 & 0 & 0 & 0 & 0 & $\frac{243}{2 h^2}$ & 0 & 0 & $\frac{225}{h^2}$ & 0 & 0 & $\frac{171}{2 h^2}$ & 0 & 0 & -$\frac{216}{h^2}$ & 0 &
-$\frac{540}{h^2}$ & 0 & 0 & 0 & 0 & 0 & $\frac{108}{h^2}$ & $\frac{216}{h^2}$
& 0 \\
\hline
$\lambda_{28}$& 0 & 0 & 0 & 0 & 0 & 0 & 0 & 0 & 0 & 0 & 0 & 0 & 0 & 0 & 0 & 0 & 0 & 0 & 0 & 0 & 0 & $\frac{1}{12}$ & $\frac{1}{12}$ & $\frac{1}{12}$ & $\frac{1}{12}$ &
$\frac{1}{12}$ & $\frac{1}{12}$ & $\frac{1}{2}$ \\
\hline
\end{tabular}
\caption{Values of $\lambda_i(B_j)$  for $i,j=1,\dots,28$,  where $\lambda_i$ is defined in \eqref{eq:lambda-1}, \eqref{eq:lambda-2}, and \eqref{eq:lambda-3}, for the triangle $\Delta:=\langle (0,0), (h,0), (0,h)\rangle$.}
\label{tab:hermiteB2-reference-tri-h}}
\end{table}
\end{landscape}

\section{Reduced spaces of $\spS_3^2(\Delta_\WS)$}
\label{sec:reduced-spaces-local}
As known in the finite element literature, to remove Hermite degrees of freedom associated with interior points of a macro-triangle $\Delta$ or with its edges, we may express them as a linear combination, with suitable coefficients, of the degrees of freedom associated with the vertices. For determining subspaces that possess full approximation order it is necessary that the considered reduced expressions are exact for cubic polynomials. 

\subsection{Peeling away Hermite degrees of freedom}
\label{sec:peeling}
We start by discussing briefly some reasonable strategies to peel away degrees of freedom from the Hermite data in \eqref{eq:lambda-1}--\eqref{eq:lambda-3} that can be adopted for any macro-triangle $\Delta$. 
Let ${\vn_3}$ and ${\vt_3}$ be the normal and tangent direction of the edge $\vp_1\vp_2$. We remark that from the available Hermite data at $\vp_1$ and $\vp_2$ ($\lambda_j$, $j=4,\ldots,7$, $j=10,\ldots,13$, and $j=16,17$) we can directly compute the values of the normal and tangent derivative at $\vp_1$ and $\vp_2$, and also the values of the normal and tangent derivative of the normal derivative at the same points, namely
\begin{equation}
\label{eq:data-normal}
D_{\vn_3}s(\vp_1), \quad D_{\vn_3}s(\vp_2), \quad D_{\vt_3} D_{\vn_3}s(\vp_1), \quad D_{\vt_3} D_{\vn_3}s(\vp_2),
\end{equation}
and
\begin{equation}
\label{eq:data-normal2}
D^2_{\vn_3}s(\vp_1), \quad D^2_{\vn_3}s(\vp_2).
\end{equation}
Therefore, we consider the following reduction strategies that preserve cubic polynomial reproduction.
\begin{itemize}
\item {\bf Second normal derivatives associated with the edges} ($\lambda_{22}$--$\lambda_{27}$).\\
Along the edge $\vp_1\vp_2$ the second derivative in the normal direction is a $C^0$ piecewise linear function  with interior break points at $\vp_{3,1}$ and $\vp_{3.2}$. The most natural way to remove the degrees of freedom given by $\lambda_{22}$ and $\lambda_{25}$ is imposing that the second normal derivative along the edge $\vp_1\vp_2$ is the straight line interpolating the data \eqref{eq:data-normal2}. 
The other two edges of $\Delta$ can be treated similarly. 
\item {\bf First normal derivatives associated with the edges} ($\lambda_{19}$--$\lambda_{21}$).\\
Along the edge $\vp_1\vp_2$ the first derivative in the normal direction is a $C^1$ piecewise quadratic function with interior break points at $\vp_{3,1}$ and $\vp_{3.2}$. 
We can remove the degree of freedom given by $\lambda_{19}$ via one of the following approaches:
\begin{itemize} 
\item imposing that  the value of the normal derivative at $\vq_3$ is obtained by averaging the values at $\vq_3$ of the two (univariate) quadratic polynomials interpolating
$$
D_{\vn_3}s(\vp_1), \quad  D_{\vn_3}s(\vp_2), \quad D_{\vt_3} D_{\vn_3}s(\vp_1),
$$
and
$$
D_{\vn_3}s(\vp_1), \quad D_{\vn_3}s(\vp_2), \quad  D_{\vt_3} D_{\vn_3}s(\vp_2),
$$
respectively; 
\item imposing that the value of the normal derivative at $\vq_3$ is obtained by averaging the values at $\vq_3$ of the $C^1$ piecewise quadratic function with (internal) break point at $\vp_{3,1}$ which interpolates the data \eqref{eq:data-normal} and of the $C^1$ piecewise quadratic function with (internal) break point at $\vp_{3,2}$ which interpolates again the data \eqref{eq:data-normal}. 
\end{itemize}
The other two edges of $\Delta$ can be treated similarly.  
\item {\bf Value at the interior point} ($\lambda_{28}$).\\
The selection of a suitable strategy for computing the interior value at $\vq$ as a function of the Hermite data at the vertices (and possibly also along the edges) is more difficult because there are too many ways that could be used to fix it and still preserve cubic polynomial reproduction. In Section~\ref{sec:interior-dof} we will describe a general strategy to remove the interior Hermite degree of freedom based on the simplex spline representation of the space $\spS_3^2(\Delta_\WS)$; this strategy is able to preserve the symmetry of the (simplex spline) basis functions.   
\end{itemize}

\subsection{Hermite and simplex spline representation of subspaces of $\spS_3^2(\Delta_{\WS})$}
\label{sec:hermite-simplex}
We discuss how to represent subspaces of $\spS_3^2(\Delta_{\WS})$ that are obtained by peeling away certain Hermite degrees of freedom as discussed in Section~\ref{sec:peeling}, by means of the simplex spline basis in \eqref{eq:basisB}. The use of the simplex spline basis is beneficial because it intrinsically takes care of the complex geometry of the split and can be constructed without a piecewise polynomial treatment on every polygonal subregion of $\Delta_\WS$.

Let us consider the local Hermite basis of $\spS_3^2(\Delta_{\WS})$ associated with the Hermite interpolation problem stated in Proposition~\ref{pro:hermite}:
\begin{equation*}
\basisH := [H_1,\ldots,H_{28}],
\end{equation*}
i.e., the vector of functions in $\spS_3^2(\Delta_{\WS})$ such that 
$$
\lambda_i(H_j)=\delta_{i,j}, \quad i,j=1,\dots,28,
$$
where $\delta_{i,j}$ denotes the Kronecker delta and the linear functionals $\lambda_i$ are defined in \eqref{eq:lambda-1}--\eqref{eq:lambda-3}.
Then, observing that both $\basisB$ and $\basisH$ are row vectors, we have  
\begin{equation}
\label{eq:HB}
\basisB=\basisH \cdot \matC,
\end{equation}
where $\matC$ represents the collocation matrix of the basis \eqref{eq:basisB} for the Hermite interpolation problem stated in Proposition~\ref{pro:hermite}, whose structure is depicted in Table~\ref{tab:hermiteB}.

For the following we need to fix some notation. For $m\in\{1,\dots, 28\}$ we write
$$\matXmm{m} := \begin{bmatrix} 
\matXmm{m}_{11} \\[0.1cm] \matXmm{m}_{21} 
\end{bmatrix}\in\RR^{28 \times m}, \quad
\matXme{m} := \begin{bmatrix} 
\matXmm{m}_{11} & \matXmm{m}_{12} \\[0.1cm] 
\matXmm{m}_{21} & \matXmm{m}_{22}
\end{bmatrix}\in\RR^{28 \times 28}, $$
where 
$$
\matXmm{m}_{11}\in\RR^{m \times m}, \quad \matXmm{m}_{12}\in\RR^{m \times \mbar}, \quad
\matXmm{m}_{21}\in\RR^{\mbar \times m}, \quad \matXmm{m}_{22}\in\RR^{\mbar \times \mbar},  
$$
and $\mbar:=28-m$.
Let $\mI_m\in\RR^{m \times m}$ and $\mO_{m \times \mbar}\in\RR^{m \times \mbar}$ be the identity matrix and the matrix containing zero elements, respectively.

With the considered ordering of the Hermite degrees of freedom, we are looking for $m$-dimensional subspaces of $\spS_3^2(\Delta_{\WS})$ that can be identified by the first $m$ Hermite degrees of freedom in \eqref{eq:lambda-1}--\eqref{eq:lambda-3}. We assume that these subspaces can be spanned by a vector of (Hermite) basis functions 
\begin{equation}
\label{eq:Hbasism}
\basisHmm{m} := [\Hmm{m}_1,\ldots,\Hmm{m}_m],
\end{equation} 	
where
\begin{equation}
\label{eq:matrix-basis-Hm}
\basisHmm{m} = \basisH \cdot \matRHmm{m}, \quad
\matRHmm{m} := \begin{bmatrix} 
\mI_m \\[0.1cm] \matRHmm{m}_{21} 
\end{bmatrix}.
\end{equation}
Note that $m$ and $\matRHmm{m}_{21}$ have to be properly selected in order to ensure that the resulting subspace contains the space of cubic polynomials. 

Let $\langle \basis{X} \rangle$ denote the space spanned by a given set of functions $\basis{X}$. 
For an efficient local representation of the considered $m$-dimensional subspace $\langle \basisHmm{m}\rangle$, we want to find another vector of basis functions
\begin{equation}
\label{eq:Bbasism}
\basisBmm{m} := [\Bmm{m}_1,\ldots,\Bmm{m}_m],
\end{equation} 
where
\begin{equation}
\label{eq:matrix-basis-Bm}
\basisBmm{m} = \basisB \cdot \matRBmm{m}, \quad
\matRBmm{m} := \begin{bmatrix} 
\mI_m \\[0.1cm] \matRBmm{m}_{21} 
\end{bmatrix},
\end{equation}
such that
\begin{equation*}
\langle \basisHmm{m}\rangle=\langle \basisBmm{m}\rangle.
\end{equation*}
We have the following result.
\begin{proposition}
\label{prop:conversion}
For $m\in\{3,9,18,21,27\}$ the functions in \eqref{eq:Hbasism} and \eqref{eq:Bbasism} span the same subspace of $\spS_3^2(\Delta_{\WS})$ if
\begin{equation}
\label{eq:subspace_condition}
\matRHmm{m}_{21}\matCmm{m}_{11} = \matCmm{m}_{22}\matRBmm{m}_{21}+\matCmm{m}_{21},
\end{equation}
where
$$\matC =: \begin{bmatrix} 
\matCmm{m}_{11} & \matCmm{m}_{12} \\[0.1cm] 
\matCmm{m}_{21} & \matCmm{m}_{22}
\end{bmatrix}, \quad 
\matCmm{m}_{11}\in \RR^{m \times m}, \quad \matCmm{m}_{22}\in \RR^{\mbar \times\mbar}, $$
and $\matC$ is the collocation matrix in \eqref{eq:HB}.
\end{proposition}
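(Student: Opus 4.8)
The plan is to reduce this statement about spline subspaces to an elementary statement about their coordinate matrices in the fixed Hermite basis, exploiting that $\basisH$ is a genuine basis and hence a row vector of linearly independent functions. First I would express both candidate families in terms of $\basisH$: from \eqref{eq:HB} and \eqref{eq:matrix-basis-Bm} one gets $\basisBmm{m}=\basisB\,\matRBmm{m}=\basisH\,\matC\,\matRBmm{m}$, while \eqref{eq:matrix-basis-Hm} gives $\basisHmm{m}=\basisH\,\matRHmm{m}$. Because $\basisH$ is linearly independent, $\langle\basisHmm{m}\rangle=\langle\basisBmm{m}\rangle$ holds exactly when the $28\times m$ matrices $\matC\,\matRBmm{m}$ and $\matRHmm{m}$ have the same column space; for the asserted implication it therefore suffices to produce an invertible $\mM\in\RR^{m\times m}$ with $\matC\,\matRBmm{m}=\matRHmm{m}\,\mM$.

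The decisive structural input is the sparsity pattern recorded in Table~\ref{tab:hermiteB}: for each of the values $m\in\{3,9,18,21,27\}$ the upper-right block $\matCmm{m}_{12}$ vanishes identically, i.e.\ $\matC$ is block lower triangular for this particular partition. I would verify this simply by reading off the table---these are precisely the indices at which the first $m$ functionals $\lambda_1,\dots,\lambda_m$ annihilate all of the later basis functions $B_{m+1},\dots,B_{28}$, a fact traceable to the support and smoothness properties of the simplex splines. It is exactly this block-triangularity that singles out the admissible values of $m$.

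Granting $\matCmm{m}_{12}=\mO_{m\times\mbar}$, the block product collapses to
\[
\matC\,\matRBmm{m}
=\begin{bmatrix}\matCmm{m}_{11} & \mO_{m\times\mbar}\\[0.1cm]\matCmm{m}_{21} & \matCmm{m}_{22}\end{bmatrix}
\begin{bmatrix}\mI_m\\[0.1cm]\matRBmm{m}_{21}\end{bmatrix}
=\begin{bmatrix}\matCmm{m}_{11}\\[0.1cm]\matCmm{m}_{21}+\matCmm{m}_{22}\matRBmm{m}_{21}\end{bmatrix}.
\]
Since $\matC$ is nonsingular by Proposition~\ref{pro:hermite} and now block triangular, $\det\matC=\det\matCmm{m}_{11}\,\det\matCmm{m}_{22}$ forces $\matCmm{m}_{11}$ to be invertible, so I may set $\mM:=\matCmm{m}_{11}$. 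Substituting the hypothesis \eqref{eq:subspace_condition} in the form $\matCmm{m}_{21}+\matCmm{m}_{22}\matRBmm{m}_{21}=\matRHmm{m}_{21}\matCmm{m}_{11}$ into the bottom block turns the right-hand side into $\matRHmm{m}\,\matCmm{m}_{11}$, i.e.\ $\matC\,\matRBmm{m}=\matRHmm{m}\,\mM$. Left-multiplying by $\basisH$ then yields $\basisBmm{m}=\basisHmm{m}\,\mM$ with $\mM$ invertible, which is the claimed equality of spans.

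The only nontrivial step is the block-triangularity observation; everything after it is a single block multiplication together with the determinant remark. I therefore expect the main obstacle to lie in justifying $\matCmm{m}_{12}=\mO_{m\times\mbar}$ cleanly from Table~\ref{tab:hermiteB} for each of the five admissible values of $m$, rather than in any of the subsequent linear algebra.
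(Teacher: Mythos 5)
Your proof is correct and follows essentially the same route as the paper's: both rest on the block lower-triangularity of $\matC$ read off from Table~\ref{tab:hermiteB} and both culminate in the identity $\basisBmm{m}=\basisHmm{m}\,\matCmm{m}_{11}$ with $\matCmm{m}_{11}$ invertible. The only difference is presentational --- you work directly with the $28\times m$ matrices, whereas the paper pads $\basisBmm{m}$ and $\basisHmm{m}$ to full $28\times 28$ extended bases and performs the equivalent block computation there.
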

\begin{proof}
For notational convenience let us consider the following bases of $\spS_3^2(\Delta_{\WS})$:
\begin{align*}
\basisBme{m} &:= [\Bmm{m}_1,\ldots,\Bmm{m}_m,B_{m+1},\ldots,B_{28}], \\
\basisHme{m} &:= [\Hmm{m}_1,\ldots,\Hmm{m}_m,H_{m+1},\ldots,H_{28}].
\end{align*}
Therefore, setting
$$\matRBme{m} := \begin{bmatrix} 
\mI_m & \mO_{m,\mbar} \\[0.1cm] 
\matRBmm{m}_{21} & \mI_{\mbar}
\end{bmatrix}, \quad 
\matRHme{m} := \begin{bmatrix} 
\mI_m & \mO_{m,\mbar} \\[0.1cm] 
\matRHmm{m}_{21} & \mI_{\mbar}
\end{bmatrix}, $$
we get
$$
\basisBme{m}=\basisB \cdot\matRBme{m}=\basisH \cdot\matC\matRBme{m}
=\basisHme{m} \cdot(\matRHme{m})^{-1}\matC\matRBme{m}.
$$
From Table~\ref{tab:hermiteB} we observe that, for $m\in\{3,9,18,21,27\}$, the matrix $\matC$ has the following lower triangular block structure, with nonsingular diagonal blocks:
$$\matC = \begin{bmatrix} 
\matCmm{m}_{11} & \mO_{m, \mbar} \\[0.1cm] 
\matCmm{m}_{21} & \matCmm{m}_{22}
\end{bmatrix}, \quad 
\matCmm{m}_{11}\in \RR^{m \times m}, \quad \matCmm{m}_{22}\in \RR^{\mbar \times\mbar}. $$
Thus,
\begin{align*}
(\matRHme{m})^{-1}\matC\matRBme{m} &=
\begin{bmatrix} 
\mI_m & \mO_{m,\mbar} \\[0.1cm] 
-\matRHmm{m}_{21} & \mI_{\mbar}
\end{bmatrix}
\begin{bmatrix} 
\matCmm{m}_{11} & \mO_{m, \mbar} \\[0.1cm] 
\matCmm{m}_{21} & \matCmm{m}_{22}
\end{bmatrix}
\begin{bmatrix} 
\mI_m & \mO_{m,\mbar} \\[0.1cm] 
\matRBmm{m}_{21} & \mI_{\mbar}
\end{bmatrix} \\ &=
\begin{bmatrix} 
\matCmm{m}_{11} & \mO_{m, \mbar} \\[0.1cm] 
\matXmm{m}_{21} & \matCmm{m}_{22}
\end{bmatrix},
\end{align*}
where
$$
\matXmm{m}_{21}:=-\matRHmm{m}_{21}\matCmm{m}_{11}+\matCmm{m}_{21}+\matCmm{m}_{22}\matRBmm{m}_{21}.
$$
Hence, \eqref{eq:subspace_condition} shows that
$$
\basisBmm{m}=\basisHmm{m} \cdot\matCmm{m}_{11},
$$
i.e., the two vectors of functions $\basisBmm{m}$ and $\basisHmm{m}$ span the same subspace $\spS_3^2(\Delta_{\WS})$. 
\end{proof}
\begin{remark}	
For $m\in\{3,9,18,21,27\}$ the matrix $\matC$ admits a block decomposition as described in the proof of  Proposition~\ref{prop:conversion}.
Since the diagonal blocks of the matrix $\matC$ are nonsingular, condition \eqref{eq:subspace_condition} allows us to determine the submatrix $\matRHmm{m}_{21}$ once $\matRBmm{m}_{21}$ is given and vice versa. In other words, we can compute the basis $\basisHmm{m}$ in \eqref{eq:matrix-basis-Hm} once the basis $\basisBmm{m}$ in \eqref{eq:matrix-basis-Bm} is fixed and vice versa. Of course, one could also use the relation $\basisBmm{m}=\basisHmm{m} \cdot\matCmm{m}_{11}$.
\end{remark}

\subsection{Subspaces of $\spS_3^2(\Delta_{\WS})$ containing cubic polynomials}
\label{sec:pol_reproduction}
Here we derive general conditions on a vector of basis functions over $\Delta_{\WS}$, denoted by
$\basisBmm{m} := [\Bmm{m}_1,\ldots,\Bmm{m}_m]$,
such that
$$
\spP_3\subseteq \langle \basisBmm{m} \rangle \subseteq \spS_3^2(\Delta_{\WS}).
$$
Moreover, we would like to preserve the connection with the corresponding simplex spline basis $\basisB$ as described in Section~\ref{sec:hermite-simplex}.
Therefore, we consider the functions
 \begin{equation}
 \label{eq:Bmj}
 \Bmm{m}_j:=B_j+\sum_{i=m+1}^{28} r_{ij}^m B_i, \quad r_{ij}^m\in \RR, \quad j=1,\ldots, m,
\end{equation}
and the coefficients $r_{ij}^m$ form the entries of the matrix $\matRBmm{m}_{21} \in \RR^{\mbar \times m}$ in \eqref{eq:matrix-basis-Bm}.
In this perspective, we recall from \cite{LycheMS22} that the basis functions in $\basisB$ enjoy a Marsden-like identity
\begin{equation*}
(1+\vy^T\vx)^3=\sum_{j=1}^{28}\psi_j(\vy)B_j(\vx), \quad \vy\in\RR^2, \quad \vx\in\Delta,
\end{equation*}
where the dual polynomials $\psi_j$, $j=1,\ldots,28$ are defined in \cite[Theorem~4]{LycheMS22}.

\begin{proposition}
\label{prop:pol-reproduction}
The $m$-dimensional subspace of $\spS_3^2(\cT_{\WS})$ spanned by the functions in \eqref{eq:Bmj} contains cubic polynomials if and only if
\begin{equation}
\label{eq:pol-reproduction}
\sum_{j=1}^m\psi_j(\vy) r_{ij}^m=\psi_i(\vy),\quad \vy\in\RR^2, \quad i=m+1,\ldots, 28.
\end{equation}
\end{proposition}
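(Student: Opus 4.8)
The plan is to use the Marsden-like identity as the sole bridge between cubic polynomials and the simplex spline basis, reducing everything to a comparison of coefficients. The crucial preliminary observation is that, as $\vy$ ranges over $\RR^2$, the functions $\vx\mapsto(1+\vy^T\vx)^3$ span all of $\spP_3$: expanding by the multinomial theorem, the coefficient of a monomial $x_1^bx_2^c$ with $b+c\le 3$ is a nonzero multiple of $y_1^by_2^c$, so differentiating in $\vy$ and evaluating at $\vy=\vzero$ recovers each monomial of total degree at most three. Hence $\spP_3\subseteq\langle\basisBmm{m}\rangle$ holds if and only if $(1+\vy^T\vx)^3\in\langle\basisBmm{m}\rangle$ for every $\vy\in\RR^2$.

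Next I would substitute the definition \eqref{eq:Bmj} into a candidate expansion and regroup. Using the Marsden identity to write $(1+\vy^T\vx)^3=\sum_{j=1}^{28}\psi_j(\vy)B_j(\vx)$ and collecting the terms $B_i$ with $i>m$, I obtain
\begin{equation*}
\sum_{j=1}^m \psi_j(\vy)\,\Bmm{m}_j(\vx)
=\sum_{j=1}^m\psi_j(\vy)B_j(\vx)
+\sum_{i=m+1}^{28}\Bigl(\sum_{j=1}^m\psi_j(\vy)\,r_{ij}^m\Bigr)B_i(\vx).
\end{equation*}
Comparing this with the full Marsden expansion and invoking the linear independence of $B_1,\dots,B_{28}$ (they form a basis of $\spS_3^2(\Delta_{\WS})$), the first $m$ coefficients match automatically, while the tail terms agree precisely when \eqref{eq:pol-reproduction} holds. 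This gives the reverse implication at once: if \eqref{eq:pol-reproduction} holds then each $(1+\vy^T\vx)^3$ equals the displayed combination of the $\Bmm{m}_j$, hence lies in $\langle\basisBmm{m}\rangle$, and therefore so does all of $\spP_3$.

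For the forward implication I would run the same comparison in reverse. Assuming $\spP_3\subseteq\langle\basisBmm{m}\rangle$, each $(1+\vy^T\vx)^3$ admits an expansion $\sum_{j=1}^m a_j(\vy)\Bmm{m}_j(\vx)$; substituting \eqref{eq:Bmj} and matching the coefficient of $B_j$ for $j\le m$ (using linear independence) forces $a_j(\vy)=\psi_j(\vy)$, and then matching the coefficient of each $B_i$ with $i>m$ yields exactly \eqref{eq:pol-reproduction}. I expect no serious obstacle, since the whole argument is a coefficient comparison made rigorous by the linear independence of the simplex spline basis. The only point demanding care is the spanning claim for $\{(1+\vy^T\vx)^3\}$, which must be checked so that reproducing these distinguished cubics is genuinely equivalent to reproducing all of $\spP_3$; once that is secured, the equivalence follows by reading off coefficients from the Marsden identity.
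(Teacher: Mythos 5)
Your proposal is correct and follows essentially the same route as the paper: expand $(1+\vy^T\vx)^3$ via the Marsden-like identity, substitute \eqref{eq:Bmj}, regroup, and compare coefficients using the linear independence of $B_1,\ldots,B_{28}$. The only difference is that you explicitly justify that the cubics $(1+\vy^T\vx)^3$ span $\spP_3$, a point the paper leaves implicit; this is a welcome but minor addition.
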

\begin{proof}
To ensure $\spP_3\subseteq \langle \basisBmm{m} \rangle$,
we look for $\psi_j^{m}$, $j=1,\dots, m$ such that
$$
(1+\vy^T\vx)^3=\sum_{j=1}^{m}\psi_j^{m}(\vy)\Bmm{m}_j(\vx), \quad \vy\in\RR^2,\quad \vx\in\Delta.
$$
From \eqref{eq:Bmj} it follows
$$
\sum_{j=1}^{m}\psi_j^{m}(\vy)\Bmm{m}_j(\vx)=
\sum_{j=1}^{m}\psi_j^{m}(\vy)B_j(\vx)+
\sum_{i=m+1}^{28}\sum_{j=1}^m\psi_j^{m}(\vy) r_{ij}^m B_i(\vx).
$$
Since the functions $B_1,\ldots, B_{28}$ form a basis, 
we have
$$
\sum_{j=1}^{28}\psi_j(\vy)B_j(\vx)=\sum_{j=1}^{m}\psi_j^{m}(\vy)\Bmm{m}_j(\vx)
$$
if and only if
$$
\psi_j^{m}(\vy)= \psi_j(\vy),\quad \vy\in\RR^2, \quad j=1,\ldots, m\\
$$
and
$$
\sum_{j=1}^m\psi_j^{m}(\vy) r_{ij}^m=\psi_i(\vy),\quad \vy\in\RR^2, \quad i=m+1,\ldots, 28,
$$
which gives \eqref{eq:pol-reproduction}.
\end{proof}

\subsection{Removing the interior degree of freedom}
\label{sec:interior-dof}
We now detail a specific construction to peel away the Hermite degree of freedom associated with the triangle, i.e., to remove the degree of freedom described in \eqref{eq:lambda-3}. We follow mostly the notation from Section~\ref{sec:pol_reproduction} with $m=27$ to ensure polynomial reproduction.
In other words, we want to construct $27$-dimensional subspaces of $\spS_3^2(\Delta_\WS)$ spanned by basis functions of the form
$$
\Bmm{27}_j:=B_j+ r_j B_{28}, \quad r_j\in \mathbb{R}, \quad j=1,\ldots, 27,
$$
where, to simplify the notation, we set
$$
r_j:=r_{28,j}^{27}, \quad j=1,\ldots,27.
$$
Then, Proposition~\ref{prop:pol-reproduction} requires to look for coefficients $r_j$ such that
\begin{equation}
\label{eq:red27}
\sum_{j=1}^{27}\psi_j(\vy) r_j=\psi_{28}(\vy),\quad \vy\in\RR^2.
\end{equation}
This equation is an equality between cubic polynomials. Therefore, it is satisfied whenever it holds true for ten unisolvent points $\vy_i$, $i=1,\ldots,10$. We choose the standard B\'ezier domain points in a reference triangle $\Delta$. This results in $10$ equations for $27$ unknowns. To reduce the number of possible solutions, it is reasonable to exploit the symmetry of the basis functions $B_1,\ldots, B_{28}$ by collecting them in six ``similarity'' groups and asking that the functions in the same group have the same coefficient. More precisely, we set
\begin{alignat*}{6}
\phi_1(\vy) &:= \sum_{i=1}^3\psi_i(\vy), &\quad
\phi_2(\vy) &:= \sum_{i=4}^9\psi_i(\vy), &\quad
\phi_3(\vy) &:= \sum_{i=10}^{15}\psi_i(\vy),\\
\phi_4(\vy) &:= \sum_{i=16}^{18}\psi_i(\vy), &\quad
\phi_5(\vy) &:= \sum_{i=19}^{21}\psi_i(\vy),&\quad
\phi_6(\vy) &:= \sum_{i=22}^{27}\psi_i(\vy),
\end{alignat*}
and we look for $\widehat r_j$, $j=1,\ldots,6$ such that
\begin{equation}
\label{eq:red27-sim}
\sum_{j=1}^{6}\phi_j(\vy) \widehat r_j=\psi_{28}(\vy),\quad \vy\in\RR^2.
\end{equation}
This implies
$$
r_j:=\begin{cases}
\widehat r_1, & j=1,\ldots,3, \\
\widehat r_2, & j=4,\ldots,9, \\
\widehat r_3, & j=10,\ldots,15, \\
\widehat r_4, & j=16,\ldots,18, \\
\widehat r_5, & j=19,\ldots,21, \\
\widehat r_6, & j=22,\ldots,27. \\
\end{cases}
$$
A direct computation shows that we have a three-parameter family of solutions for \eqref{eq:red27-sim}, namely
\begin{align*}
\widehat r_1 &= 4\widehat r_2+12\widehat r_3+\frac{12}{5}\widehat r_5+\frac{2}{3},\\[0.2cm]
\widehat r_4 &= -15\widehat r_2-35\widehat r_3-7\widehat r_5-2,\\[0.2cm]
\widehat r_6 &= \frac{9}{2}\widehat r_2+\frac{21}{2}\widehat r_3+\frac{9}{5}\widehat r_5+\frac{5}{6}.
\end{align*}
\begin{remark}
An inspection of the family of solutions, and in particular of the expression of $\widehat r_4$, immediately shows that there are no nonnegative solutions $\{\widehat r_j\geq0,\ j=1,\ldots,6\}$. Considering that the support of $B_{28}$ is not contained in any of the supports of $B_j$, $j=1,\dots, 27$, the lack of nonnegative solutions of \eqref{eq:red27-sim} implies that there are no bases $\basisBmm{27}$ consisting of solely nonnegative functions.
\end{remark}
\begin{remark} The minimum value of a component of a solution of \eqref{eq:red27-sim} cannot be greater than $-\frac{1}{29}$. Indeed, if we
assume that 
$$\widehat r_j\geq w>-\frac{1}{29},\quad j=1,\ldots, 6,$$
then we have that 
$\widehat r_4\leq-15w-35w-7w-2=-57w-2<-\frac{1}{29}$. This gives a contradiction.
\end{remark}

\begin{examples}
In this example we report some interesting solutions of \eqref{eq:red27} under the ``similarity'' constraint. We only show the vector $[\widehat r_1,\ldots,\widehat r_6]$.
\begin{itemize}
\item Solution with only nonzero coefficients corresponding to 12 of the 18 basis functions associated with the vertices:
\begin{equation*}
\left[-\frac{2}{27},-\frac{5}{27},0,\frac{7}{9},0,0\right].
\end{equation*}
\item Solution with only nonzero coefficients corresponding to basis functions that are zero on the boundary:
\begin{equation*}
\left[0,0,0,-\frac{1}{18},-\frac{5}{18},\frac{1}{3}\right].
\end{equation*}
\item Solution with a large number of zero coefficients:
\begin{equation}\label{eq:rhat-3}
\left[-\frac{4}{9},0,0,\frac{67}{54},-\frac{25}{54},0\right];
\end{equation}
see Figures~\ref{fig:B27-zeros-a}--\ref{fig:B27-zeros-b} for an illustration of the corresponding basis functions.
\item Solution with least negative coefficients:
\begin{equation}\label{eq:rhat-4}
\left[\frac{14}{435},-\frac{1}{29},-\frac{1}{29},-\frac{1}{29},-\frac{1}{29},\frac{221}{870}\right];
\end{equation}
see Figures~\ref{fig:B27-least-a}--\ref{fig:B27-least-b} for an illustration of the corresponding basis functions.
\end{itemize}
\end{examples}

\begin{figure}
\centering
\includegraphics[width=3.8cm]{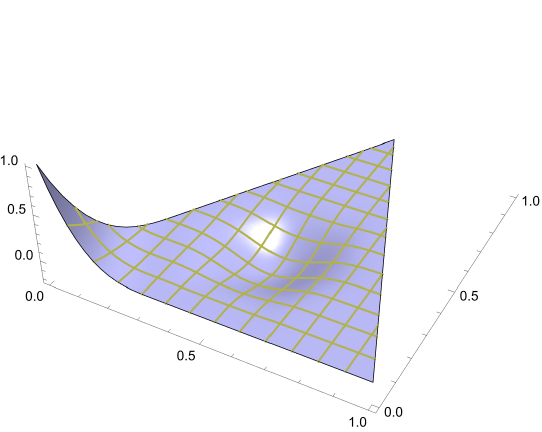}
\includegraphics[width=3.8cm]{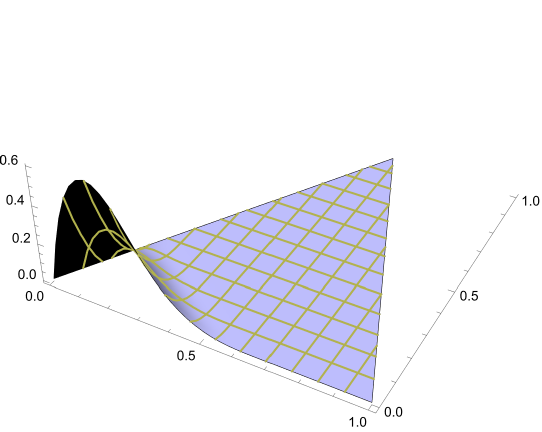}
\includegraphics[width=3.8cm]{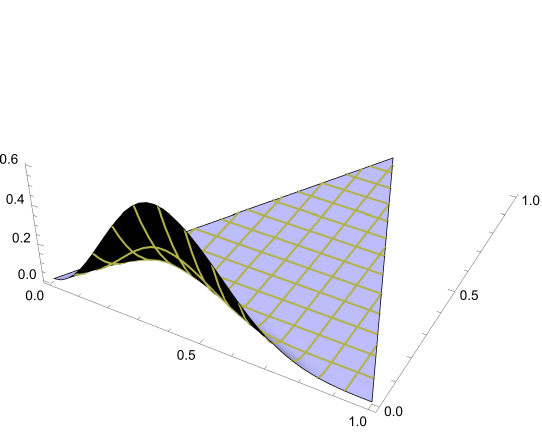}
\caption{The basis functions $B_1^{27}$, $B_4^{27}$, and $B_{10}^{27}$, constructed according to the procedure in Section~\ref{sec:interior-dof} with $[\widehat r_1,\ldots,\widehat r_6]$ in \eqref{eq:rhat-3}. See Figure~\ref{fig:B1-B4-B10} for comparison.}
\label{fig:B27-zeros-a}
\smallskip
\centering
\includegraphics[width=3.8cm]{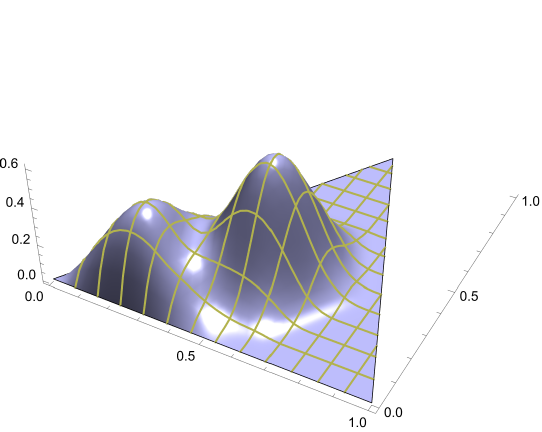}
\includegraphics[width=3.8cm]{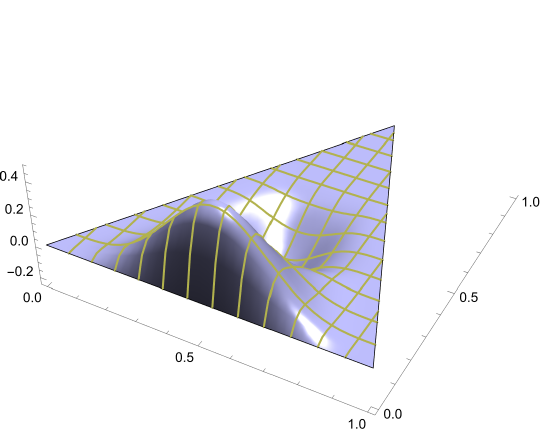}
\includegraphics[width=3.8cm]{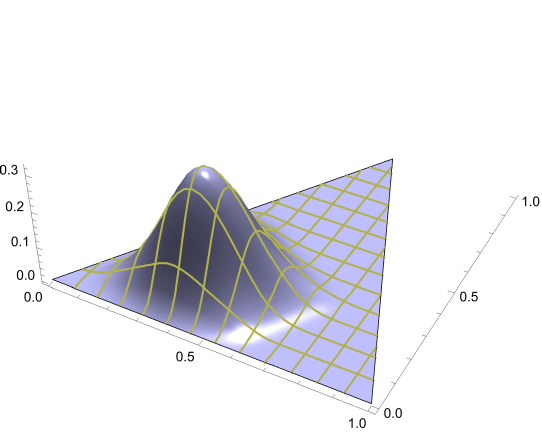}
\caption{The basis functions $B_{16}^{27}$, $B_{19}^{27}$, and $B_{22}^{27}$, constructed according to the procedure in Section~\ref{sec:interior-dof} with $[\widehat r_1,\ldots,\widehat r_6]$ in \eqref{eq:rhat-3}. See Figure~\ref{fig:B16-B19-B22} for comparison.}
\label{fig:B27-zeros-b}
\smallskip
\centering
\includegraphics[width=3.8cm]{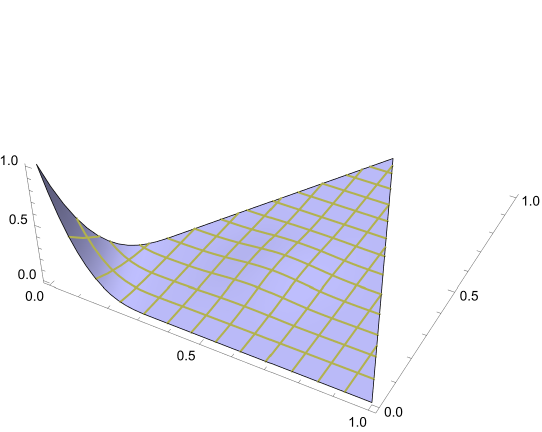}
\includegraphics[width=3.8cm]{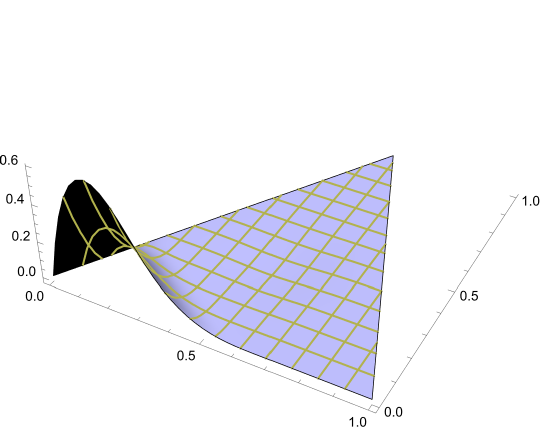}
\includegraphics[width=3.8cm]{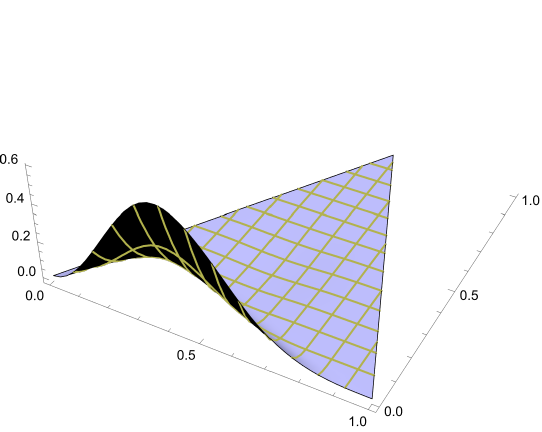}
\caption{The basis functions $B_1^{27}$, $B_4^{27}$, and $B_{10}^{27}$, constructed according to the procedure in Section~\ref{sec:interior-dof} with $[\widehat r_1,\ldots,\widehat r_6]$ in \eqref{eq:rhat-4}. See Figure~\ref{fig:B1-B4-B10} for comparison.}
\label{fig:B27-least-a}
\smallskip
\centering
\includegraphics[width=3.8cm]{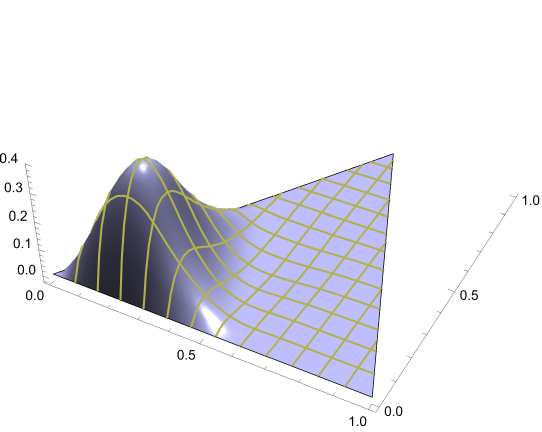}
\includegraphics[width=3.8cm]{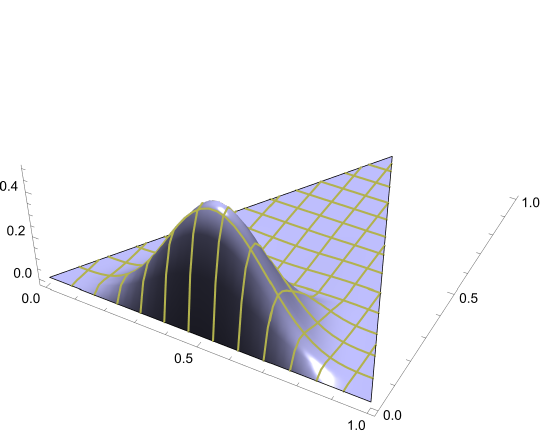}
\includegraphics[width=3.8cm]{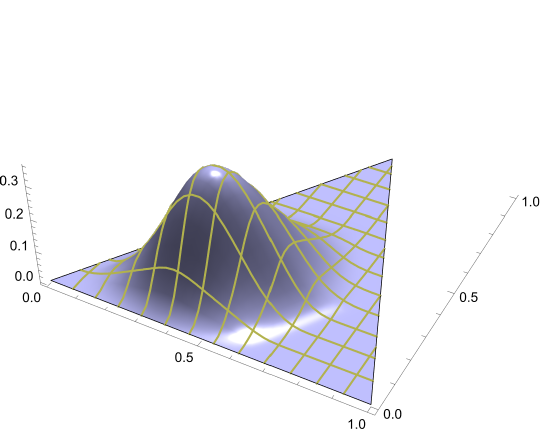}
\caption{The basis functions $B_{16}^{27}$, $B_{19}^{27}$, and $B_{22}^{27}$, constructed according to the procedure in Section~\ref{sec:interior-dof} with $[\widehat r_1,\ldots,\widehat r_6]$ in \eqref{eq:rhat-4}. See Figure~\ref{fig:B16-B19-B22} for comparison.}
\label{fig:B27-least-b}
\end{figure}

\section{Reduced spaces of $\spS_3^2(\cT_\WS)$}
\label{sec:reduced-spaces-global}
In Section~\ref{sec:reduced-spaces-local} we discussed how to construct subspaces of $\spS_3^2(\Delta_\WS)$ that contain cubic polynomials and can be described by Hermite data only associated with the vertices and edges of $\Delta$. We also showed how to represent them in terms of the local simplex spline basis $\basisB$. 
For $m\in\{18,21,27\}$ we now summarize a general method how to apply these local subspace strategies on each triangle $\Delta$ of $\cT$ to obtain global subspaces  
$$\spS_3^{2,m}(\cT_\WS)\subset \spS_3^2(\cT_\WS)$$ 
with the following properties:
\begin{itemize}
\item $\dim( \spS_3^{2,m}(\cT_\WS))=6n_V+\begin{cases}
0 &m=18,
\\
n_E& m=21,
\\
3n_E& m=27;
\end{cases}$
\item $\spP_3\subset \spS_3^{2,m}(\cT_\WS)$;
\item any element of $\spS_3^{2,m}(\cT_\WS)$ is uniquely identified by the different Hermite interpolation conditions $\lambda_j$ in \eqref{eq:lambda-1}--\eqref{eq:lambda-3} for $j=1,\ldots, m$ related to all triangles $\Delta$ of $\cT$;
\item any element of $\spS_3^{2,m}(\cT_\WS)$ can be locally constructed on each triangle $\Delta$ of $\cT$ separately; 
\item $\spS_3^{2,m}(\cT_\WS)$ is equipped with a local simplex spline representation.
\end{itemize}

Our goal can be reached by patching together local subspaces of dimension $m$, $\spS_3^{2,m}(\Delta_\WS)\subset \spS_3^{2}(\Delta_\WS)$, built as described in the procedure below. These local subspaces are equipped with a simplex spline representation and admit a simple global $C^2$ coupling. A characterization in terms of the first $m$ Hermite degrees of freedom in \eqref{eq:lambda-1}--\eqref{eq:lambda-3} follows also from their construction.
\begin{procedure}
\label{proc:assembly}
For $m\in\{18,21,27\}$ and each $\Delta\in\cT$, the subspace $\spS_3^{2,m}(\Delta_\WS)\subset \spS_3^{2}(\Delta_\WS)$ is constructed as follows.
\begin{enumerate}
\item  If $m=27$: 
\begin{enumerate}
\item construct the basis $\basisBmm{27}$ according to the procedure in Section~\ref{sec:interior-dof};
\item set
$$ \spS_3^{2,m}(\Delta_\WS):=\langle\basisBmm{27}\rangle.$$		
\end{enumerate}
\item If $m=18$ or $m=21$: 
\begin{enumerate}
\item construct the basis $\basisBmm{27}$ according to the procedure in Section~\ref{sec:interior-dof} and compute the corresponding matrix $\matRBmm{27}_{21}$;
\item determine the Hermite basis $\basisHmm{27}$ such that $\langle\basisHmm{27}\rangle=\langle\basisBmm{27}\rangle$
by using the form \eqref{eq:matrix-basis-Hm} with $\matRHmm{27}_{21}$ obtained from \eqref{eq:subspace_condition};
\item remove the Hermite degrees of freedom connected to $\lambda_j$, $j=m+1,\ldots, 27$, according to the strategies described in Section~\ref{sec:peeling} and compute the corresponding matrix $\matRHmm{m}_{21}$;
\item determine the basis $\basisBmm{m}$ such that $\langle\basisHmm{m}\rangle=\langle\basisBmm{m}\rangle$
by using the form \eqref{eq:matrix-basis-Bm} with $\matRBmm{m}_{21}$ obtained from \eqref{eq:subspace_condition};
\item set 
$$ \spS_3^{2,m}(\Delta_\WS):=\langle\basisBmm{m}\rangle.$$
\end{enumerate}
\end{enumerate}
\end{procedure}

Given the local subspaces $\spS_3^{2,m}(\Delta_\WS)\subset \spS_3^{2}(\Delta_\WS)$ constructed according to Procedure~\ref{proc:assembly}, where the same (reduced) Hermite interpolation conditions across the edges of $\cT$ are considered in Step 2.c., we define the global subspace
$\spS_3^{2,m}(\cT_\WS)\subset \spS_3^{2}(\cT_\WS)$ as
\begin{equation}\label{eq:def_space_reduced}
\spS_3^{2,m}(\cT_\WS):=\{\spline\in C^2(\Omega): \spline_{|\Delta}\in \spS_3^{2,m}(\Delta_\WS), \ \Delta\in \cT \}.
\end{equation}
It can be easily checked that the space in \eqref{eq:def_space_reduced} enjoys the properties listed in the beginning of the section.
Note that the use of the same (reduced) Hermite interpolation conditions across the edges of  $\cT$, as discussed in Section~\ref{sec:peeling}, ensures that the local Hermite basis functions of the subspaces $\spS_3^{2,m}(\Delta_\WS)$ can be immediately joined to form globally $C^2$ functions, in the spirit of classical macro-elements.
As for the peeling of the interior degree of freedom, we can directly work with the simplex spline basis, without involving the Hermite basis. Indeed, the procedure in Section~\ref{sec:interior-dof} only redistributes the contribution of the function $B_{28}$ among the  remaining basis functions $B_1,\ldots, B_{27}$. Since $B_{28}$ is of class $C^2$ across the edges of $\Delta$ this does not affect the $C^2$ smoothness across these edges
of the functions in the subspace. This explains the difference between the cases $m=27$ and $m\in\{18,21\}$ in Procedure~\ref{proc:assembly}.

\begin{remark}
Due to the structure of the local bases $\basisBmm{m}$ obtained from Procedure~\ref{proc:assembly}, the corresponding local representations give rise to $C^2$ smoothness conditions across edges of $\cT$ that are similar to the ones derived in \cite[Theorems~5 and~6]{LycheMS22}.
\end{remark}

\begin{remark}
The approach presented in Procedure~\ref{proc:assembly} could also be applied to remove some degrees of freedom associated with the vertices: second derivatives ($m=9$) or first and second derivatives ($m=3$); see Proposition~\ref{prop:conversion}. However, this is of no practical interest because it would result in subspaces that do not contain cubic polynomials and so only possess a reduced approximation power.  
\end{remark}

\section{Conclusion}
\label{sec:conclusion}
We have presented a framework for constructing suitable subspaces of the $C^2$ cubic spline space defined on the $\WS$ refinement of a given triangulation $\cT$.
Adopting a procedure common in classical finite element constructions, the subspaces are characterized by specific Hermite degrees of freedom associated with only the vertices and edges of $\cT$, or even only the vertices of $\cT$. The $\WS$ split of each triangle provides a sufficiently large number of degrees of freedom. Therefore, contrary to some reduction strategies known in the literature for popular finite elements (such as the Bell element \cite{Bell.69}), the presented reduction still contains cubic polynomials, a necessary condition for attaining full approximation order.

The characterization in terms of (reduced) Hermite degrees of freedom allows us to locally construct any spline on each triangle of $\cT$ separately, in the spirit of classical macro-elements.
We have also provided the representation of the obtained local subspaces in terms of a local simplex spline basis. This avoids the need to consider separate polynomial representations on each of the polygonal subregions of the $\WS$ split, making the complex geometry of the $\WS$ split transparent to the user.

The minimum number of degrees of freedom necessary for a local and symmetric formulation of the proposed strategy is six times the number of vertices of the given triangulation. This considerably reduces the dimension of the space of $C^2$ cubic splines on the $\WS$ refined triangulation; see \eqref{eq:dim-space}.

In practice, the implementation of the reduction strategy outlined in Section~\ref{sec:reduced-spaces-global} requires the knowledge of the conversion matrix between the Hermite and the simplex spline basis for the full space on each macro-triangle; see Table~\ref{tab:hermiteB} for its sparsity structure. The exact values of the entries of such a matrix can be obtained by standard elementary calculus from the values in \cite[Tables~3--5]{LycheMS22}. It is clear that these values depend on the specific geometry of each triangle and their explicit expressions might be cumbersome in terms of general vertex coordinates, so we did not explicitly report them here for a general triangle. We just refer the reader to Table~\ref{tab:hermiteB2-reference-tri-h} for the case of the scaled unit triangle.

Besides polynomial reproduction, a formal proof of full approximation order of the reduced spaces requires showing that they are equipped with stable locally supported bases; see \cite[Chapters~5 and~10]{Lai.Schumaker07}. On each refined triangle $\Delta_\WS$, the adopted local simplex spline basis behaves like the classical Bernstein polynomial basis on a triangle. In particular, it enjoys local smoothness conditions across the edges of the given triangulation $\cT$, completely analogous to those well known for Bernstein--B\'ezier representations; see, e.g., \cite[Chapter~2]{Lai.Schumaker07} or \cite[Section~3]{MS-this-volume} for an introduction. In this perspective, the use of the local simplex spline basis allows for an extension of the minimal determining set method \cite{Lai.Schumaker07} to construct stable locally supported bases, simply by replacing the Bernstein polynomial basis by the local simplex spline basis; see \cite[Section~4]{LycheMS22} for more details. This is an additional reason to prefer the local bases $\basisB$ and $\basisB^m$ when representing the full and the reduced spaces we are dealing with.

\section*{Acknowledgements}
This work was supported 
by the MUR Excellence Department Project \text{MatMod@TOV} (CUP E83C23000330006) awarded to the Department of Mathematics of the University of Rome Tor Vergata
and by the National Research Center in High Performance Computing, Big Data and Quantum Computing (CUP E83C22003230001).
C. Manni and H. Speleers are members of Gruppo Nazionale per il Calcolo Scientifico, Istituto Nazionale di Alta Matematica.

\begin{bibsection}
\begin{biblist}

\bib{Bell.69}{article}{
   author={Bell, K.},
   title={A refined triangular plate bending finite element},
   journal={Int. J. Numer. Methods Eng.},
   volume={1},
   year={1969},
   pages={101--122},
}

\bib{Ciarlet.78}{book}{
   author={Ciarlet, P. G.},
   title={The Finite Element Method for Elliptic Problems},
   series={Classics in Applied Mathematics},
   volume={40},
   publisher={Society for Industrial and Applied Mathematics},
   place={Philadelphia},
   year={2002},
}
    
\bib{Lai.Schumaker07}{book}{
   author={Lai, M.-J.},
   author={Schumaker, L. L.},
   title={Spline Functions on Triangulations},
   series={Encyclopedia of Mathematics and its Applications},
   volume={110},
   publisher={Cambridge University Press},
   place={Cambridge},
   year={2007},
}

\bib{LycheMS22}{article}{
   author={Lyche, T.},
   author={Manni, C.},
   author={Speleers, H.},
   title={Construction of $C^2$ cubic splines on arbitrary triangulations},
   journal={Found. Comput. Math.},
   volume={22},
   year={2022},
   pages={1309--1350},
}

\bib{LycheMS23}{article}{
   author={Lyche, T.},
   author={Manni, C.},
   author={Speleers, H.},
   title={A local simplex spline basis for $C^3$ quartic splines on arbitrary triangulations},
   journal={Appl. Math. Comput.},
   volume={462},
   pages={128330},
   year={2024},
} 

\bib{MS-this-volume}{article}{
   author={Manni, C.},
   author={Sorokina, T.},
   title={Bernstein--B\'ezier form and its role in studying multivariate splines},
   journal={this volume},
}

\bib{Micchelli79}{article}{
   author={Micchelli, C. A.},
   title={On a numerically efficient method for computing multivariate $B$-splines},
   conference={
      title={Multivariate Approximation Theory},
      series={Proc. Conf., Math. Res. Inst., Oberwolfach},
   },
   book={
      editor={Schempp, W.},
      editor={Zeller, K.},
      series={Internat. Ser. Numer. Math.},
      volume={51},
      publisher={Birkh\"auser}, 
      place={Basel--Boston},
   },
   year={1979},
   pages={211--248},
}

\bib{Powell.Sabin77}{article}{
   author={Powell, M. J. D.},
   author={Sabin, M. A.},
   title={Piecewise quadratic approximations on triangles},
   journal={ACM Trans. Math. Software},
   volume={3},
   year={1977},
   pages={316--325},
}

\bib{Prautsch.Boehm.Paluszny02}{book}{
   author={Prautzsch, H.},
   author={Boehm, W.},
   author={Paluszny, M.},
   title={B\'ezier and B-Spline Techniques},
   series={Mathematics and Visualization},
   publisher={Springer--Verlag},
   place={Berlin},
   year={2002},
}

\bib{Sande20}{article}{
   author={Sande, E.},
   author={Manni, C.},
   author={Speleers, H.},
   title={Explicit error estimates for spline approximation of arbitrary smoothness in isogeometric analysis},
   journal={Numer. Math.},
   volume={144},
   year={2020},
   pages={889--929},
}

\bib{Schumaker07}{book}{
   author={Schumaker, L. L.},
   title={Spline Functions: Basic Theory, 3rd Edition},
   publisher={Cambridge University Press},
   place={Cambridge},
   year={2007},
}

\bib{Schumaker15}{book}{
   author={Schumaker, L. L.},
   title={Spline Functions: Computational Methods},
   publisher={Society for Industrial and Applied Mathematics},
   place={Philadelphia},
   year={2015},
}

\bib{Speleers10}{article}{
   author={Speleers, H.},
   title={A normalized basis for quintic Powell--Sabin splines},
   journal={Comput. Aided Geom. Design},
   volume={27},
   year={2010},
   pages={438--457},
}

\bib{Speleers13}{article}{
   author={Speleers, H.},
   title={Construction of normalized B-splines for a family of smooth spline spaces over Powell--Sabin triangulations},
   journal={Constr. Approx.},
   volume={37},
   year={2013},
   pages={41--72},
}

\bib{Wang01}{book}{
  author={Wang, R.-H.},
  title={Multivariate Spline Functions and Their Applications},
  publisher={Kluwer Academic Publishers},
  place={Beijing},
  year={2001},
}
    
\bib{Wang.90}{article}{
   author={Wang, R.-H.},
   author={Shi, X.-Q.},
   title={$S_{\mu+1}^{\mu}$ surface interpolations over triangulations},
   conference={
      title={Approximation, Optimization and Computing: Theory and Applications},
   },
   book={
      editor={Law, A. G.},
      editor={Wang, C. L.},
      publisher={Elsevier Science Publishers B.V.},
   },
   year={1990},
   pages={205--208},
}

\end{biblist}
\end{bibsection}

\end{document}